\date{}
\renewcommand{\uppercasenonmath}[1]{}
\theoremstyle{plain}
\newtheorem{theorem}{Theorem}[section]
\newtheorem{proposition}[theorem]{Proposition}
\newtheorem{lemma}[theorem]{Lemma}
\newtheorem{corollary}[theorem]{Corollary}
\theoremstyle{definition}
\newtheorem{example}[theorem]{Example}
\newtheorem{definition}[theorem]{Definition}
\theoremstyle{definition}
\newtheorem*{acknowledgement}{Acknowledgement}
\theoremstyle{remark}
\newtheorem{remark}[theorem]{Remark}
\newcommand{\pf}{\noindent\begin {proof}}
\newcommand{\epf}{\end{proof}}
\newcommand{\Ker}{\mbox{\rm Ker}}
\newcommand{\Ext}{\mbox{\rm Ext}}
\newcommand{\Hom}{\mbox{\rm Hom}}
\newcommand{\Tor}{\mbox{\rm Tor}}
\newcommand{\Prufer}{Pr\"{u}fer}
\def\m{{\frak m}}
\def\p{{\frak p}}
\def\GV{{\rm GV}}
\def\tor{{\rm tor_{\rm GV}}}
\def\Hom{{\rm Hom}}
\def\Ext{{\rm Ext}}
\def\Tor{{\rm Tor}}
\def\fkm{{\frak m}}
\def\Ker{{\rm Ker}}
\def\Im{{\rm Im}}
\def\Coker{{\rm Coker}}
\def\Nil{{\rm Nil}}
\def\NP{{\rm NP}}
\def\Z{{\rm Z}}
\def\T{{\rm T}}
\def\GV{{\rm GV}}
\def\Max{{\rm Max}}
\def\ZN{{\rm ZN}}
\def\PvMR{{\rm PvMR}}
\def\PvMD{{\rm PvMD}}
\def\Prufer{{\rm Pr\"{u}fer}}
\def\Lt{{\rm L}}
\begin{document}
\begin{center}
{\large  \bf A note on  $\phi$-\Prufer\ $v$-multiplication rings}

\vspace{0.5cm}   Xiaolei Zhang$^{a}$

{\footnotesize a.\ Department of Basic Courses, Chengdu Aeronautic Polytechnic, Chengdu 610100, China\\

E-mail: zxlrghj@163.com\\}
\end{center}

\bigskip
\centerline { \bf  Abstract} In this note, we show that  a strongly $\phi$-ring $R$ is a $\phi$-$\PvMR$ if and only if  any $\phi$-torsion free $R$-module is $\phi$-$w$-flat, if and only if    any  divisible module is  nonnil-absolutely $w$-pure module, if and only if     any $h$-divisible module is  nonnil-absolutely $w$-pure module, if and only if  any finitely generated nonnil ideal of $R$ is $w$-projective.
\vbox to 0.3cm{}\\
{\it Key Words:}  $\phi$-$\PvMR$s; $\phi$-$w$-flat modules; nonnil-absolutely $w$-pure modules; $w$-projective modules.\\
{\it 2010 Mathematics Subject Classification:}  13A15; 13F05.

\leftskip0truemm \rightskip0truemm
\bigskip

\section{Introduction}

Throughout this note, $R$ denotes a commutative ring with identity and all modules are unitary. We always denote by $\Nil(R)$  the nilpotent radical of $R$, $\Z(R)$ the set of all zero-divisors of $R$ and $\T(R)$ the total ring of fractions of  $R$. An ideal $I$ of $R$  is said to be nonnil if there is a non-nilpotent element in $I$.
A ring $R$ is  an \emph{$\NP$-ring} if $\Nil(R)$ is a prime ideal, and a \emph{$\ZN$-ring} if $\Z(R)=\Nil(R)$. A prime ideal $\p$ is said to be \emph{divided prime} if $\p\subsetneq (x)$, for every $x\in R-\p$. Set $\mathcal{H}=\{R|R$ is a commutative ring and \Nil($R$)\ is a divided prime ideal of $R\}$. A ring $R$ is a \emph{$\phi$-ring} if $R\in \mathcal{H}$. Moreover, a $\ZN$ $\phi$-ring is said to be a \emph{strongly $\phi$-ring}. For a $\phi$-ring $R$, the map $\phi: \T(R)\rightarrow R_{\Nil(R)}$ such that $\phi(\frac{b}{a})=\frac{b}{a}$ is a ring homomorphism, and the image of $R$, denoted by $\phi(R)$, is a strongly $\phi$-ring.  The notion of \Prufer\ domains is one of the most famous integral domains that attract many algebraists. In 2004, Anderson and Badawi \cite{FA04} extended the notion of \Prufer\ domains to that of  \emph{ $\phi$-Pr\"{u}fer rings} which are $\phi$-rings $R$ satisfying that each finitely generated nonnil ideal is $\phi$-invertible.  The authors in \cite{FA04} characterized $\phi$-\Prufer\ rings from the  ring-theoretic viewpoint. In 2018, Zhao \cite{Z18} characterized  $\phi$-\Prufer\ rings using the homological properties of $\phi$-flat modules. Recently, Zhang and Qi \cite{ZQ20} gave a module-theoretic  characterization of $\phi$-\Prufer\ rings in terms of $\phi$-flat modules and nonnil-FP-injective modules.

Recall that an integral domain $R$ is called a \Prufer\ $v$-multiplication domain (\PvMD\ for short) provided that every nonzero ideal of $R$ is $w$-invertible (see \cite{LM99} for example). In 2014, Wang et al. \cite{KW14} showed that an integral domain $R$ is a $\PvMD$ if and only if $R_{\m}$ is a valuation domain  for any maximal $w$-ideal $\m$ of $R$. In 2015, Wang  et al. \cite{fq15} obtained that an integral domain $R$ is a $\PvMD$  if and only if $w$-w.gl.dim$(R) \leq 1$, if and only if every torsion-free $R$-module is $w$-flat. In 2018, Xing  et al. \cite{xw182} gave a new module-theoretic characterization of  \Prufer\ $v$-multiplication domains, i.e., an integral domain $R$ is a  \Prufer\ $v$-multiplication domain if and only if every divisible $R$-module is absolutely $w$-pure, if and only if every $h$-divisible $R$-module is absolutely $w$-pure. In order to extend the notion of \PvMD s to that of commutative rings in $\mathcal{H}$, the author of this paper and Zhao \cite{ZxlZ20} introduced the notion of $\phi$-$\PvMR$s as the $\phi$-rings in which any finitely generated nonnil ideal is $\phi$-$w$-invertible. They also gave some ring-theoretic and homology-theoretic characterizations of $\phi$-$\PvMR$s. In this note, we mainly study the module-theoretic characterizations of $\phi$-$\PvMR$s which can be seen a generalization of  Wang's and Xing's results in \cite{fq15} and \cite{xw182} respectively.

As our work involves the $w$-operation theory, we give a quick review as below. Let $R$ be a commutative ring and $J$ a finitely generated ideal of $R$. Then $J$ is called a \emph{$\GV$-ideal} if the natural homomorphism $R\rightarrow \Hom_R(J,R)$ is an isomorphism. The set of all $\GV$-ideals is denoted by $\GV(R)$.
Let $M$ be an $R$-module and define
\begin{center}
{\rm $\tor(M):=\{x\in M|Jx=0$, for some $J\in \GV(R) \}.$}
\end{center}
An $R$-module $M$ is said to be \emph{$\GV$-torsion} (resp., \emph{$\GV$-torsion-free}) if $\tor(M)=M$ (resp., $\tor(M)=0$). A $\GV$-torsion free module $M$ is said to be a \emph{$w$-module} if,  for any $x\in E(M)$, there is a $\GV$-ideal $J$ such that $Jx\subseteq M$ where $E(M)$ is the injective envelope of $M$. The \emph{$w$-envelope} $M_w$ of a $\GV$-torsion free module $M$ is defined  by the minimal $w$-module  that contains $M$.  A \emph{maximal $w$-ideal} for which is maximal among the $w$-submodules of $R$ is proved to be prime (see {\cite[Theorem 6.2.15]{fk16}}). The set of all maximal $w$-ideals is denoted by $w$-$\Max(R)$. Let $M$ be an $R$-module and set $\Lt(M) = (M/\tor(M))_w$. Recall from \cite{WK15} that $M$ is said to be $w$-projective if $\Ext^1_R(\Lt(M), N)$ is $\GV$-torsion for any torsion-free $w$-module $N$.

An $R$-homomorphism $f:M\rightarrow N$ is said to be a \emph{$w$-monomorphism} (resp., \emph{$w$-epimorphism}, \emph{$w$-isomorphism}) if for any $ \p\in w$-$\Max(R)$, $f_\p:M_\p\rightarrow N_\p$ is a monomorphism (resp., an epimorphism, an isomorphism). Note that $f$ is a $w$-monomorphism (resp., $w$-epimorphism) if and only if $\Ker(f)$ (resp., $\Coker(f)$) is $\GV$-torsion. A sequence $A\rightarrow B\rightarrow C$ is said to be \emph{$w$-exact} if,  for any $\p\in w$-$\Max(R)$, $A_\p\rightarrow B_\p\rightarrow C_\p$ is exact. A class $\mathcal{C}$ of $R$-modules is said to be \emph{closed under $w$-isomorphisms}  provided that for any $w$-isomorphism $f:M\rightarrow N$, if one of the modules $M$ and $N$ is in $\mathcal{C}$, so is the other. An $R$-module $M$ is said to be of \emph{finite type} if there exist a finitely generated free module $F$ and a $w$-epimorphism $g: F\rightarrow M$, or equivalently, if there exists a finitely generated $R$-submodule $N$ of $M$ such that $N_w = M_w$. Certainly, the class of  finite type modules is closed under $w$-isomorphisms.

\section{nonnil-absolutely $w$-pure modules}

Recall from \cite{xw181}, a $w$-exact sequence of $R$-modules $0\rightarrow N\rightarrow M\rightarrow L\rightarrow 0$ is said to be $w$-pure exact if, for any $R$-module $K$, the induced sequence $0\rightarrow K\otimes_RN\rightarrow K\otimes_RM\rightarrow K\otimes_RL\rightarrow 0$ is $w$-exact. If $N$ is a submodule of $M$ and the exact sequence  $0\rightarrow N\rightarrow M\rightarrow M/N\rightarrow 0$ is $w$-pure exact, then $N$ is said to be a $w$-pure submodule of $M$. Recall from \cite{xw182}, an $R$-module $M$ is called a \emph{absolutely $w$-pure module} provided that $M$ is   $w$-pure  in every module containing $M$ as a submodule.

Let $R$ be an $\NP$-ring and $M$  an $R$-module. Define
\begin{center}
$\phi$-$tor(M)=\{x\in M|Ix=0$ for some nonnil ideal $I$ of  $R \}$.
\end{center}
An $R$-module $M$ is said to be \emph{$\phi$-torsion} (resp., \emph{$\phi$-torsion free}) provided that  $\phi$-$tor(M)=M$ (resp., $\phi$-$tor(M)=0$).
Now we generalize the notions in \cite{xw181} and \cite{xw182} to $\NP$-rings. A $w$-exact sequence $0\rightarrow M\rightarrow N\rightarrow N/M\rightarrow 0$ of  $R$-modules is said to be  nonnil $w$-pure exact provided that $0\rightarrow \Hom_R(T,M)\rightarrow \Hom_R(T,N)\rightarrow \Hom_R(T,N/M)\rightarrow 0$ is $w$-exact for any finitely presented $\phi$-torsion module $T$. In addition, if $M$ is a submodule of $N$, then we say $M$ is   nonnil $w$-pure submodule in $N$.

\begin{definition} Let $R$ be an $\NP$-ring.  An $R$-module $M$ is called a \emph{nonnil-absolutely $w$-pure module} provided that $M$ is  nonnil  $w$-pure submodule  in every $R$-module containing $M$ as a submodule.
\end{definition}

Following from Xing \cite[Theorem 2.6]{xw182}, an $R$-module $M$ is absolutely $w$-pure if and only if $\Ext_R^1(F,M)$ is $\GV$-torsion for any finitely presented module $F$, if and only if $M$ is a  $w$-pure submodule in its injective envelope. Now, we give a $\phi$-version of Xing' result.

\begin{proposition}\label{int-ns}
Let $R$ be an $\NP$-ring and $M$ an $R$-module. The following statements are equivalent:
\begin{enumerate}
   \item  $M$ is a nonnil-absolutely  $w$-pure module;
      \item    $\Ext_R^1(T,M)$ is $\GV$-torsion for any finitely presented $\phi$-torsion module $T$;
    \item  $M$ is a nonnil  $w$-pure submodule in every injective module containing $M$;
    \item  $M$ is a nonnil  $w$-pure submodule in its injective envelope;
        \item  for any diagram $$\xymatrix@R=20pt@C=25pt{ &  M&\\
0\ar[r]&  K\ar[u]^{f}\ar[r]^{i}&F\ar@{-->}[lu]_{g_c}\\}$$
 with $F$ finitely generated projective, $K$ finitely generated and $F/K$ $\phi$-torsion, there is some $J\in\GV(R)$ such that any given $c\in J$, there exists $g_c:F\rightarrow M$ such that $cf=g_ci$.
\end{enumerate}
\end{proposition}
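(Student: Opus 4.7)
The plan is to establish the cycle (1)$\Rightarrow$(3)$\Rightarrow$(4)$\Rightarrow$(2)$\Rightarrow$(1) together with the equivalence (2)$\Leftrightarrow$(5), mirroring Xing's argument in \cite[Theorem 2.6]{xw182} with finitely presented modules replaced throughout by finitely presented $\phi$-torsion modules. The implications (1)$\Rightarrow$(3) and (3)$\Rightarrow$(4) are formal, since the injective modules containing $M$ form a subclass of all modules containing $M$, and $E(M)$ is one such injective module. For (4)$\Rightarrow$(2), I would embed $M\hookrightarrow E(M)$ and, for a finitely presented $\phi$-torsion module $T$, apply $\Hom_R(T,-)$ to $0\to M\to E(M)\to E(M)/M\to 0$. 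Since $\Ext^1_R(T,E(M))=0$, the long exact sequence gives $\Ext^1_R(T,M)\cong\coker\bigl(\Hom_R(T,E(M))\to \Hom_R(T,E(M)/M)\bigr)$, and the nonnil $w$-purity in (4) forces this cokernel to be \GV-torsion. For (2)$\Rightarrow$(1), fix an embedding $M\hookrightarrow N$ and a finitely presented $\phi$-torsion $T$; the cokernel of $\Hom_R(T,N)\to \Hom_R(T,N/M)$ embeds into $\Ext^1_R(T,M)$, which is \GV-torsion by (2), so this map is a $w$-epimorphism, and combined with the left-exactness of $\Hom_R(T,-)$, the embedding is nonnil $w$-pure.

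For (2)$\Leftrightarrow$(5), the strategy is to translate the \GV-torsion property of $\Ext^1$ into a lifting property via a presentation. In the diagram of (5), the hypotheses make $F/K$ a finitely presented $\phi$-torsion module, and the short exact sequence $0\to K\to F\to F/K\to 0$ produces the right exact piece $\Hom_R(F,M)\to \Hom_R(K,M)\to \Ext^1_R(F/K,M)\to 0$. The class $[f]\in\Ext^1_R(F/K,M)$ of the given $f\colon K\to M$ is \GV-torsion by (2), so some $J\in\GV(R)$ satisfies $J[f]=0$; this is equivalent to saying that for each $c\in J$ the map $cf\colon K\to M$ factors through $F$, which yields the desired $g_c$. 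Conversely, given (5), every element of $\Ext^1_R(T,M)$ for a finitely presented $\phi$-torsion $T$ is represented by some $f\colon K\to M$ via a chosen presentation $0\to K\to F\to T\to 0$ with $F$ finitely generated free and $K$ finitely generated, and the lifting datum provides a single $J\in\GV(R)$ annihilating this class, so $\Ext^1_R(T,M)$ is \GV-torsion.

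The main subtlety, modest as it is, lies in (5)$\Rightarrow$(2): one must verify that the uniform lifting of $cf$ for $c$ in a single $J\in\GV(R)$ is genuinely equivalent to $J$ annihilating the \Ext\ class $[f]$, which depends on the $R$-linearity of the connecting map $\Hom_R(K,M)\to \Ext^1_R(T,M)$ and on the fact that the conclusion of (5) is independent of the presentation chosen for $T$. Once this bookkeeping is done, the rest of the argument is a routine \Ext-chase in the spirit of the classical absolutely $w$-pure characterization.
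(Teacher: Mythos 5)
Your proposal is correct and follows essentially the same route as the paper: the cycle $(1)\Rightarrow(3)\Rightarrow(4)\Rightarrow(2)\Rightarrow(1)$ via the long exact sequence for $\Hom_R(T,-)$ applied to $M\hookrightarrow E(M)$, and $(2)\Leftrightarrow(5)$ by identifying $\Ext^1_R(F/K,M)$ with the cokernel of $i^*\colon\Hom_R(F,M)\to\Hom_R(K,M)$ so that $J[f]=0$ is exactly the uniform liftability of $cf$ for $c\in J$. The ``subtlety'' you flag in $(5)\Rightarrow(2)$ is harmless: since $\GV$-torsionness is checked elementwise, one fixed presentation of $T$ suffices, as in the paper.
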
\
\begin{proof} $(1)\Rightarrow (3)\Rightarrow (4):$ Trivial.

$(2)\Rightarrow (1):$ Let $N$ be an $R$-module  containing $M$, and $T$ a finitely presented $\phi$-torsion module. Then we have the following exact sequence $$0\rightarrow \Hom_R(T,M)\rightarrow \Hom_R(T,N)\rightarrow \Hom_R(T,N/M)\rightarrow \Ext^1_R(T,M).$$ Since $\Ext^1_R(T,M)$ is $\GV$-torsion, we have $$0\rightarrow \Hom_R(T,M)\rightarrow \Hom_R(T,N)\rightarrow \Hom_R(T,N/M)\rightarrow 0$$ is $w$-exact. Hence $M$ is a nonnil  $w$-pure submodule in $N$.

$(4)\Rightarrow (2):$ Let $E$ be the injective envelope of $M$. Then for any  finitely presented $\phi$-torsion module $T$, we have the following exact sequence:
$0\rightarrow \Hom_R(T,M)\rightarrow \Hom_R(T,E)\rightarrow \Hom_R(T,E/M)\rightarrow \Ext^1_R(T,M)\rightarrow 0$.  Thus  we have $\Ext^1_R(T,M)$ is $\GV$-torsion by $(4)$.

$(2)\Rightarrow (5):$ Consider the exact sequence $0\rightarrow K\xrightarrow{i} F\xrightarrow{\pi} F/K\rightarrow 0$ with  $F/K$  finitely presented  $\phi$-torsion. we have the following exact sequence: $\Hom_R(F,M)\xrightarrow{i^{\ast}} \Hom_R(K,M)\rightarrow \Ext_R^1(F/K,M)\rightarrow 0$. Since $F/K$ is finitely presented  $\phi$-torsion, $\Ext_R^1(F/K,M)$ is $\GV$-torsion by $(2)$. Thus $i^{\ast}$ is a $w$-epimorphism. Since $f\in \Hom_R(K,M)$, there exists a $\GV$-ideal $J$ of $R$ such that $Jf\in \Im (i^{\ast})$. So, for any given $c\in J$, there exists $g_c:F\rightarrow A$ such that $g_ci=cf.$

$(5)\Rightarrow (2):$ Let $T$ be a finitely presented $\phi$-torsion module. Then exists a short sequence $0\rightarrow K\xrightarrow{i} F\rightarrow T\rightarrow 0$ with $F$ finitely generated projective and $K$ finitely generated. Consider the exact sequence   $\Hom_R(F,M)\xrightarrow{i^{\ast}} \Hom_R(K,M)\rightarrow \Ext_R^1(T,M)\rightarrow 0$. For any $f\in \Hom_R(K,M)$, there is some $J\in\GV(R)$ such that any given $c\in J$, there exists $g_c:F\rightarrow M$ such that $cf=g_ci$ by $(5)$. So $Jf\subseteq \Im(i^{\ast})$. Thus $i^{\ast}$ is a $w$-epimorphism and so $\Ext_R^1(T,M)$ is  $\GV$-torsion.
\end{proof}

Recall from \cite[Definition 1.2]{ZQ20} that an $R$-module $M$ is called  \emph{nonnil-FP-injective} provided that $\Ext_R^1(T,M)=0$ for  any finitely presented $\phi$-torsion module $T$. Thus we have the following result by Proposition \ref{int-ns}.

\begin{corollary}\label{asfap-int-3}
Let $R$ be an $\NP$-ring.  Then every nonnil-FP-injective module is nonnil-absolutely $w$-pure.
\end{corollary}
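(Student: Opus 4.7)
The plan is to invoke Proposition \ref{int-ns} directly and observe that the nonnil-FP-injective condition is strictly stronger than one of its equivalent formulations of nonnil-absolute $w$-purity. There is essentially no obstacle here, so the proof is a one-line deduction rather than a genuine argument.

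More explicitly, I would start by letting $M$ be a nonnil-FP-injective $R$-module. By the definition recalled from \cite[Definition 1.2]{ZQ20}, this means $\Ext_R^1(T,M) = 0$ for every finitely presented $\phi$-torsion $R$-module $T$. The zero module is trivially $\GV$-torsion (annihilated by the $\GV$-ideal $R$ itself), hence $\Ext_R^1(T,M)$ is $\GV$-torsion for every such $T$.

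Then I would apply the equivalence $(1) \Leftrightarrow (2)$ of Proposition \ref{int-ns}, which asserts precisely that $\Ext_R^1(T,M)$ being $\GV$-torsion for all finitely presented $\phi$-torsion $T$ characterizes nonnil-absolutely $w$-pure modules. This immediately yields that $M$ is nonnil-absolutely $w$-pure, completing the proof. No further work is needed, and no genuine obstacle arises since the heavy lifting has already been carried out in Proposition \ref{int-ns}.
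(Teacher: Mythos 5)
Your proposal is correct and matches the paper's own (implicit) argument exactly: the corollary is deduced from the equivalence $(1)\Leftrightarrow(2)$ of Proposition \ref{int-ns}, since $\Ext_R^1(T,M)=0$ is in particular $\GV$-torsion for every finitely presented $\phi$-torsion module $T$. No further comment is needed.
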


\begin{lemma}\label{gv-abwp}
 Let $T$ be a $\GV$-torsion module. Then $T$ is a absolutely $w$-pure module.
\end{lemma}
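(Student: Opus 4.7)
The plan is to invoke the characterization of absolutely $w$-pure modules recalled just before the lemma (from Xing \cite{xw182}): $T$ is absolutely $w$-pure if and only if $\Ext_R^1(F,T)$ is $\GV$-torsion for every finitely presented $R$-module $F$. So the task reduces to showing that for every finitely presented $F$, the module $\Ext_R^1(F,T)$ is $\GV$-torsion whenever $T$ is.

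Concretely, I would take a short exact sequence $0\to K\to P\to F\to 0$ with $P$ finitely generated projective and $K$ finitely generated (possible because $F$ is finitely presented). Applying $\Hom_R(-,T)$ yields a surjection $\Hom_R(K,T)\twoheadrightarrow \Ext_R^1(F,T)$, so it suffices to show $\Hom_R(K,T)$ is $\GV$-torsion. For any $\varphi\in\Hom_R(K,T)$ and any set of generators $k_1,\dots,k_n$ of $K$, $\GV$-torsion-ness of $T$ gives $\GV$-ideals $J_i$ with $J_i\varphi(k_i)=0$; then $J:=J_1\cdots J_n\in\GV(R)$ annihilates $\varphi(k_i)$ for all $i$, hence $J\varphi=0$. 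Since quotients of $\GV$-torsion modules are $\GV$-torsion, we obtain that $\Ext_R^1(F,T)$ is $\GV$-torsion.

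An equally short alternative route is via localization: a module is $\GV$-torsion precisely when it vanishes at every maximal $w$-ideal (apply the $w$-monomorphism/epimorphism criterion to $0\to M$), so $T_{\mathfrak{p}}=0$ for every $\mathfrak{p}\in w\mbox{-}\Max(R)$; since $F$ is finitely presented one has $\Ext_R^1(F,T)_{\mathfrak{p}}\cong \Ext_{R_{\mathfrak{p}}}^1(F_{\mathfrak{p}},T_{\mathfrak{p}})=0$, whence $\Ext_R^1(F,T)$ is $\GV$-torsion.

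There is no real obstacle here; the only small point to handle carefully is the closure of $\GV(R)$ under finite products, which is standard in $w$-operation theory and is the hinge used when passing from generator-wise annihilation to a single $\GV$-ideal killing the whole homomorphism $\varphi$.
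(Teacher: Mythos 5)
Your main argument is correct and is essentially the paper's own proof: the paper likewise takes $0\to K\to P\to F\to 0$ with $P$ finitely generated projective and $K$ finitely generated, notes the surjection $\Hom_R(K,T)\to\Ext_R^1(F,T)$, and concludes since $\Hom_R(K,T)$ is $\GV$-torsion when $K$ is finitely generated and $T$ is $\GV$-torsion. Your extra details (the product $J_1\cdots J_n\in\GV(R)$) and the alternative localization route are both fine but not needed beyond what the paper records.
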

\begin{proof} Let $T$ be a $\GV$-torsion module and $F$ a finitely presented $R$-module. Considering the exact sequence $0\rightarrow K\rightarrow P\rightarrow F\rightarrow 0$ with $P$ finitely generated projective and $K$ finitely generated,  we have the following exact sequence $\Hom_R(K,T)\rightarrow \Ext_R^1(F,T)\rightarrow 0$. Since $K$ is finitely generated and $T$ is $\GV$-torsion, $\Hom_R(K,T)$ is $\GV$-torsion. So $\Ext_R^1(F,T)$  is $\GV$-torsion. Consequently,  $T$  is a absolutely $w$-pure module.
\end{proof}

Obviously,  we have the following result by Proposition \ref{int-ns}, \cite[Theorem 2.6]{xw182} and Lemma \ref{gv-abwp}.
\begin{corollary}\label{asfap-int-1}
Let $R$ be an $\NP$-ring.  Then every absolutely $w$-pure module is nonnil-absolutely $w$-pure.  Consequently, every  $\GV$-torsion module is a nonnil-absolutely $w$-pure module.
\end{corollary}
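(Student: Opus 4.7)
The plan is to chain together the three ingredients cited in the hint: the Ext-characterization of nonnil-absolutely $w$-pure modules from Proposition \ref{int-ns}, the analogous Ext-characterization of absolutely $w$-pure modules from \cite[Theorem 2.6]{xw182}, and Lemma \ref{gv-abwp}.

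For the first assertion, suppose $M$ is absolutely $w$-pure. By \cite[Theorem 2.6]{xw182}, $\Ext_R^1(F,M)$ is $\GV$-torsion for every finitely presented $R$-module $F$. Since every finitely presented $\phi$-torsion $R$-module $T$ is in particular a finitely presented $R$-module, it follows that $\Ext_R^1(T,M)$ is $\GV$-torsion for every such $T$. Applying the equivalence $(1)\Leftrightarrow(2)$ of Proposition \ref{int-ns} then yields that $M$ is nonnil-absolutely $w$-pure. The second assertion is immediate: by Lemma \ref{gv-abwp} any $\GV$-torsion module is absolutely $w$-pure, and hence by what we just proved it is nonnil-absolutely $w$-pure.

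There is no real obstacle here; the corollary is purely a matter of noticing that the defining test class for nonnil-absolute $w$-purity (finitely presented $\phi$-torsion modules) is contained in the defining test class for absolute $w$-purity (all finitely presented modules), so the stronger hypothesis implies the weaker one.
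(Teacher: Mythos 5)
Your argument is correct and is exactly the one the paper intends: compare the Ext-test classes via Proposition \ref{int-ns} and \cite[Theorem 2.6]{xw182}, then apply Lemma \ref{gv-abwp} for the $\GV$-torsion case. No differences worth noting.
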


In order to characterize rings over which every  nonnil-absolutely $w$-pure module is absolutely $w$-pure, we recall some basic facts.

\begin{lemma}\cite[Lemma 1.6]{ZxlZ20} \label{nin}
Let $R$ be a $\phi$-ring and  $I$ a nonnil ideal of $R$. Then $\Nil(R)=I\Nil(R)$.
\end{lemma}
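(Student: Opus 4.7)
The plan is to prove the two inclusions $I\Nil(R) \subseteq \Nil(R)$ and $\Nil(R) \subseteq I\Nil(R)$ separately. The first inclusion is immediate: since $\Nil(R)$ is an ideal, $I\Nil(R) \subseteq \Nil(R)$ requires no hypothesis on $R$ or $I$ beyond their being ideals of a commutative ring.

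For the nontrivial inclusion, I would exploit the two defining features of the class $\mathcal{H}$ simultaneously, namely that $\Nil(R)$ is both \emph{prime} and \emph{divided}. Since $I$ is a nonnil ideal, I may choose some $x \in I$ with $x \notin \Nil(R)$. The divided hypothesis then yields $\Nil(R) \subsetneq (x)$, so every element $n \in \Nil(R)$ admits a representation $n = rx$ for some $r \in R$.

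The key step is the extraction of $r$ into $\Nil(R)$: because $rx = n \in \Nil(R)$ and $x \notin \Nil(R)$, primality of $\Nil(R)$ forces $r \in \Nil(R)$. Hence $n = rx \in \Nil(R)\cdot I = I\Nil(R)$, which gives $\Nil(R) \subseteq I\Nil(R)$ and completes the argument.

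I do not anticipate any serious obstacle; the entire proof is a direct unpacking of the definition of a $\phi$-ring together with the observation that a nonnil ideal contains a witness element outside $\Nil(R)$. The only point that requires some care is to remember that \emph{both} the prime and the divided properties must be invoked — dividedness produces the factorization $n = rx$, while primality is what sends the cofactor $r$ back into $\Nil(R)$.
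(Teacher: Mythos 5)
Your argument is correct and is exactly the standard one (the paper only cites this lemma from \cite{ZxlZ20}, but the same divided-plus-prime maneuver appears verbatim inside its proof of Lemma \ref{str-phi-lc}): pick a non-nilpotent $x\in I$, use dividedness to write $n=rx$ for each $n\in\Nil(R)$, and use primality to push $r$ into $\Nil(R)$. No issues.
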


\begin{lemma}\cite[Proposition 1.5]{ZQ20}\label{R-nil-1}
Let $R$ be a $\phi$-ring and $M$ an FP-injective $R/\Nil(R)$-module. Then $M$ is nonnil-FP-injective over $R$.
\end{lemma}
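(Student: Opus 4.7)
My plan is to reduce the computation of $\Ext^1_R(T,M)$ to an $\Ext$ group over $\bar R := R/\Nil(R)$, where the hypothesis gives vanishing immediately. The strategy is to show that for any finitely presented $\phi$-torsion $R$-module $T$, every $R$-module extension
\begin{equation*}
0 \to M \to E \to T \to 0
\end{equation*}
actually consists of $\bar R$-modules, and therefore must split.

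First I would verify that $T$ is annihilated by $\Nil(R)$. Choose finitely many generators $t_1,\ldots,t_n$ of $T$ and pick nonnil ideals $I_1,\ldots,I_n$ with $I_j t_j = 0$. Their product $I := I_1\cdots I_n$ annihilates $T$; because $\Nil(R)$ is prime, $I$ is again nonnil. Applying Lemma \ref{nin}, $\Nil(R) = I\Nil(R)$, so for any $n\in\Nil(R)$ and $t\in T$ we may write $n = \sum_k a_k b_k$ with $a_k\in I$, $b_k\in\Nil(R)$, and then $nt = \sum_k b_k(a_k t) = 0$. A similar tensoring argument applied to a finite presentation of $T$ over $R$ shows that $T$ is also finitely presented over $\bar R$.

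The crucial step is showing $\Nil(R)E = 0$. Since $I$ kills $T$, the preimage argument gives $IE \subseteq M$, and since $\Nil(R)M = 0$ we get $\Nil(R)(IE)=0$. Now use Lemma \ref{nin} a second time: given $n\in\Nil(R)$, write $n = \sum_k a_k b_k$ with $a_k\in I$, $b_k\in\Nil(R)$; then for any $e\in E$,
\begin{equation*}
ne \;=\; \sum_k b_k(a_k e) \;=\; 0,
\end{equation*}
since each $a_k e\in IE\subseteq M$ and $\Nil(R)$ annihilates $M$. Hence $E$ is an $\bar R$-module, and the extension is classified by an element of $\Ext^1_{\bar R}(T,M)$.

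Since $T$ is finitely presented over $\bar R$ and $M$ is FP-injective over $\bar R$, $\Ext^1_{\bar R}(T,M) = 0$, so the extension splits over $\bar R$, hence over $R$. This yields $\Ext^1_R(T,M) = 0$, which is exactly the conclusion that $M$ is nonnil-FP-injective over $R$. I expect the main obstacle to be the ``two-stage'' use of Lemma \ref{nin}: the identity $\Nil(R) = I\Nil(R)$ is invoked first to conclude $\Nil(R)T = 0$, and then a second time to upgrade $IE\subseteq M$ (which is automatic from the extension) to the much stronger statement $\Nil(R)E = 0$, which is what ultimately forces the extension to live over $\bar R$.
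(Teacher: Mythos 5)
Your argument is correct. Note that the paper does not actually prove this lemma --- it is quoted from \cite[Proposition 1.5]{ZQ20} --- so the comparison is with that external source rather than with a proof in the text; your write-up stands on its own as a complete proof. The two key verifications both check out: (i) a finitely presented $\phi$-torsion module $T$ is annihilated by some single nonnil ideal $I$ (the product of finitely many nonnil ideals is nonnil because $\Nil(R)$ is prime), and then $\Nil(R)=I\Nil(R)$ forces $\Nil(R)T=0$, so $T$ is finitely presented over $\bar R=R/\Nil(R)$ by right-exactness of $-\otimes_R\bar R$; (ii) for any extension $0\to M\to E\to T\to 0$ one has $IE\subseteq M$, and the second application of $\Nil(R)=I\Nil(R)$ together with $\Nil(R)M=0$ gives $\Nil(R)E=0$, so the extension lives entirely over $\bar R$ and splits there (hence over $R$) by FP-injectivity of $M$ and finite presentation of $T$ over $\bar R$. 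The extension-theoretic reduction is a clean way to sidestep the fact that $\bar R$ is not flat or projective over $R$, so no general change-of-rings isomorphism for $\Ext^1$ is available; identifying $\Ext^1_R(T,M)$ with $\Ext^1_{\bar R}(T,M)$ by showing every Yoneda extension is an $\bar R$-extension is exactly the right substitute.
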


Let $R\{x\}$ be the $w$-Nagata ring of $R$, that is, the localization of $R[X]$ at the multiplicative closed set $S_w=\{f\in R[x]\,|\, c(f)\in \GV(R)\},$ where $c(f)$ is the content of $f$ (see \cite{WK15}).  Then $\{\m\{x\}|\m\in w$-$\Max(R)\}$ is the set of all maximal ideal of $R\{x\}$ by \cite[Proposition 3.3(4)]{WK15}. Set $$E'=\prod\limits_{m\in w\mbox{-}\Max(R)}E_R(R\{x\}/\m\{x\})$$ where $E_R(R\{x\}/\m\{x\})$ is the injective envelope of the $R$-module $R\{x\}/\m\{x\}$. Since $R\{x\}/\m\{x\}$ is a $w$-module over $R$ by \cite[Theorem 6.6.19(2)]{fk16}, then $E'$ is an injective $w$-module over $R$.  Set  $$\tilde{E}:=\Hom_R(R\{x\},E').$$ Then $\tilde{E}$ is trivially an $R\{x\}$-module. Since $R\{x\}$ is a flat $R$-module,   $\tilde{E}$ is an injective $w$-module by \cite[Theorem 6.1.18]{fk16} and \cite[Theorem 3.2.9]{EJ00}.
\begin{lemma}\cite[Corollary 3.11]{ZWQ202}\label{cog-tor}
Let $M$ be an $R$-module. The following statements are equivalent:
\begin{enumerate}
 \item  $M$ is $\GV$-torsion;
 \item $\Hom_R(M,E)=0$ for any injective $w$-module $E$;
 \item  $\Hom_R(M,\tilde{E})=0$.
\end{enumerate}
\end{lemma}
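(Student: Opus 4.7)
The plan is to prove the cycle $(1) \Rightarrow (2) \Rightarrow (3) \Rightarrow (1)$; the first two implications are essentially formal, and the full weight of the lemma rests on $(3) \Rightarrow (1)$, which will use tensor-hom adjunction together with the classification of the maximal ideals of $R\{x\}$ recalled just before the statement.

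For $(1)\Rightarrow(2)$, observe that any injective $w$-module $E$ is, by definition, a $w$-module, hence $\GV$-torsion-free; thus for any $R$-homomorphism $f:M\to E$ from a $\GV$-torsion source and any $m\in M$, if $Jm=0$ with $J\in\GV(R)$ then $Jf(m)=0$, so $f(m)\in\tor(E)=0$. The implication $(2)\Rightarrow(3)$ is immediate because the paragraph preceding the lemma already establishes that $\tilde{E}$ is an injective $w$-module.

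For $(3)\Rightarrow(1)$, tensor-hom adjunction gives
\[
\Hom_R(M,\tilde{E})=\Hom_R\bigl(M,\Hom_R(R\{x\},E')\bigr)\cong \Hom_R\bigl(M\otimes_R R\{x\},E'\bigr),
\]
so the hypothesis becomes $\Hom_R(M\otimes_R R\{x\},E')=0$. I claim this forces $M\otimes_R R\{x\}=0$. Indeed, suppose $0\neq y\in M\otimes_R R\{x\}$. Since every maximal ideal of $R\{x\}$ has the form $\m\{x\}$ for some $\m\in w$-$\Max(R)$, pick such an $\m$ with $\ann_{R\{x\}}(y)\subseteq\m\{x\}$; the resulting nonzero surjection $R\{x\}y\to R\{x\}/\m\{x\}$, composed with the canonical inclusion $R\{x\}/\m\{x\}\hookrightarrow E_R(R\{x\}/\m\{x\})$ and extended along the $R$-injectivity of the latter, yields a nonzero $R$-linear map $M\otimes_R R\{x\}\to E_R(R\{x\}/\m\{x\})\hookrightarrow E'$, contradicting the hypothesis. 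Finally, $M\otimes_R R\{x\}=0$ is equivalent to $M$ being $\GV$-torsion: if $M$ is $\GV$-torsion and $J=(j_1,\dots,j_n)\in\GV(R)$ kills $m$, then $f=j_1+j_2x+\cdots+j_nx^{n-1}\in S_w$ satisfies $fm=0$ in $M[x]$, so $m\otimes 1=0$ in $M\otimes_R R\{x\}\cong S_w^{-1}M[x]$; conversely, if $f=\sum a_ix^i\in S_w$ satisfies $fm=0$ in $M[x]$ then each $a_im=0$, so the content $c(f)\in\GV(R)$ annihilates $m$.

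The main obstacle is the step producing a nonzero $R$-linear map $M\otimes_R R\{x\}\to E'$ from a nonzero element of $M\otimes_R R\{x\}$; it is precisely here that the two crucial properties of the $w$-Nagata ring enter, namely the fact that every maximal ideal of $R\{x\}$ has the form $\m\{x\}$ and the $R$-injectivity of $E_R(R\{x\}/\m\{x\})$. Everything else is routine bookkeeping.
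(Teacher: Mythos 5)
Your argument is correct. Note that the paper does not prove this lemma at all --- it simply cites \cite[Corollary 3.11]{ZWQ202} --- so there is no in-paper proof to compare against; what you have written is a valid self-contained replacement. The two formal implications are fine, and the substance of $(3)\Rightarrow(1)$ is sound: the adjunction $\Hom_R(M,\Hom_R(R\{x\},E'))\cong\Hom_R(M\otimes_R R\{x\},E')$ reduces everything to showing that $E'$ cogenerates the category of $R\{x\}$-modules viewed as $R$-modules, and your construction (map a nonzero cyclic $R\{x\}$-submodule onto a simple quotient $R\{x\}/\m\{x\}$, embed into $E_R(R\{x\}/\m\{x\})$, extend by $R$-injectivity, and include that factor into the product $E'$) does exactly that, using precisely the two inputs you identify. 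The final equivalence $M\otimes_R R\{x\}=0\Leftrightarrow M$ is $\GV$-torsion via $M\otimes_R R\{x\}\cong S_w^{-1}M[x]$ and the content of an annihilating polynomial is also correct; the only point worth making explicit is that a general element of $S_w^{-1}M[x]$ has finitely many coefficients, each killed by some $f_i\in S_w$, and the product of the $f_i$ again lies in $S_w$ (a finitely generated ideal containing a $\GV$-ideal is a $\GV$-ideal, so $S_w$ is multiplicatively closed), which is routine.
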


\begin{theorem}\label{asfap-int}
Let $R$ be  a $\phi$-ring.  Then  $R$ is an integral domain if and only if  any  nonnil-absolutely $w$-pure module is absolutely $w$-pure.
\end{theorem}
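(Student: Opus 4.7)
For the easier direction $(\Leftarrow)$, the plan is to prove the contrapositive: if the $\phi$-ring $R$ is not a domain, I exhibit a nonnil-absolutely $w$-pure module that fails to be absolutely $w$-pure. Because the zero ideal of a $\phi$-ring can be prime only when $R$ is a domain, not being a domain forces $\Nil(R)\neq 0$. Pick a nonzero $a\in\Nil(R)$ (chosen so that $\ann_R(a)\subseteq\Nil(R)$; in the strongly $\phi$-ring case this is automatic from $Z(R)=\Nil(R)$) and consider $M:=E_{\bar R}(\bar R)$, the injective envelope of the domain $\bar R=R/\Nil(R)$ over itself, viewed as an $R$-module. Since $M$ is injective, hence FP-injective, as an $\bar R$-module, Lemma \ref{R-nil-1} makes it nonnil-FP-injective over $R$ and Corollary \ref{asfap-int-3} promotes this to nonnil-absolute $w$-purity. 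The hypothesis would then force $M$ to be absolutely $w$-pure, so by \cite[Theorem 2.6]{xw182} the module $\Ext^1_R(R/aR,M)$ would be $\GV$-torsion. Applying $\Hom_R(-,M)$ to $0\to aR\to R\to R/aR\to 0$ and using $aM\subseteq\Nil(R)M=0$ identifies this $\Ext^1$ with $M[\ann_R(a)]$, which by the choice of $a$ contains the nonzero submodule $\bar R\subseteq M$. But $\bar R$ is not $\GV$-torsion over $R$: any $\GV$-ideal $J$ annihilating $1\in\bar R$ would lie in $\Nil(R)$ and hence be nilpotent, contradicting $J\in\GV(R)$, since whenever a nonzero power of $J$ vanishes the natural map $R\to\Hom_R(J,R)$ fails to be injective. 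This is the required contradiction.

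For the harder direction $(\Rightarrow)$, I work inside a domain $R$, where $\phi$-torsion coincides with ordinary torsion. Given $M$ nonnil-absolutely $w$-pure, Proposition \ref{int-ns} supplies that $\Ext^1_R(T,M)$ is $\GV$-torsion for every finitely presented torsion $T$, and I must upgrade this to arbitrary finitely presented $F$, which by \cite[Theorem 2.6]{xw182} is equivalent to $M$ being absolutely $w$-pure. I argue through condition (5) of Proposition \ref{int-ns}: given $F$ finitely generated projective, $K\subseteq F$ finitely generated and a map $f:K\to M$ (now \emph{without} the $\phi$-torsion hypothesis on $F/K$), I must produce a $\GV$-ideal $J$ and, for each $c\in J$, an extension $g_c:F\to M$ with $cf=g_ci$.

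The core strategy is a two-step extension. Let $\widetilde K\subseteq F$ be the preimage of the torsion submodule $\tau(F/K)\subseteq F/K$, so that $\widetilde K/K$ is finitely presented torsion and $F/\widetilde K$ is torsion-free. First I extend $f$ to $\tilde f:\widetilde K\to M$ up to some $J_1\in\GV(R)$ by applying nonnil-absolute $w$-purity of $M$ to the $\phi$-torsion step $K\hookrightarrow\widetilde K$. Second I extend $\tilde f$ across $\widetilde K\hookrightarrow F$: because $F/\widetilde K$ is torsion-free and finitely generated over the domain $R$, multiplication by a suitable nonzero $d\in R$ embeds $F/\widetilde K$ into a free module $R^r$, and the quotient $R^r/d(F/\widetilde K)$ is torsion and finitely presented, allowing one more application of the nonnil hypothesis to supply the extension up to some $J_2\in\GV(R)$. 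The product $J_1 J_2\in\GV(R)$ is the desired $J$. The main obstacle is controlling the finite-presentation data through these pushouts and verifying that the ``clear the denominators'' maneuver really reduces the torsion-free extension problem back to a $\phi$-torsion one within the scope of condition (5); this is precisely where the domain hypothesis is used most essentially, because over a domain every finitely generated torsion-free module embeds, after scaling by a nonzero element, into a free module of the same rank with finitely presented torsion cokernel.
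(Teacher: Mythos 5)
Both directions of your argument have gaps, and the one in the converse (non‑domain) direction is fatal. Your construction needs a nonzero $a\in\Nil(R)$ with $\ann_R(a)\subseteq\Nil(R)$, and you justify its existence only for strongly $\phi$-rings, whereas the theorem concerns arbitrary $\phi$-rings. No such $a$ need exist: take $R=\mathbb{Z}(+)\mathbb{Q}/\mathbb{Z}$, which is a $\phi$-ring (divisibility of $\mathbb{Q}/\mathbb{Z}$ makes $\Nil(R)=0(+)\mathbb{Q}/\mathbb{Z}$ divided) but not strongly $\phi$; every nonzero nilpotent has the form $(0,m)$ with $m$ a torsion element, so $\ann_R((0,m))$ contains the non-nilpotent $(n,0)$ for $n$ the order of $m$. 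Worse, your test module $M=E_{\bar R}(\bar R)$ is just the quotient field $Q(\bar R)$, and over this $R$ it is genuinely FP-injective, hence absolutely $w$-pure: $\Ext_R^1(F,Q(\bar R))\cong \Hom_{\bar R}(\Tor_1^R(F,\bar R),Q(\bar R))$, and $\Tor_1^R(F,\bar R)$ embeds in $\bar F\otimes_{\mathbb{Z}}\mathbb{Q}/\mathbb{Z}$, a torsion group, which the torsion-free module $\mathbb{Q}$ cannot detect. So no choice of finitely presented $F$ rescues this module. This is precisely why the paper replaces $E_{\bar R}(\bar R)$ by $\Hom_R(R/\Nil(R),\widetilde{E})$ for the injective $w$-module $\widetilde{E}$ of Lemma \ref{cog-tor}, which cogenerates modulo $\GV$-torsion: absolute $w$-purity of that module forces $\Tor_1^R(F,R/\Nil(R))$ itself to be $\GV$-torsion, after which $\Nil(R)=J\Nil(R)$ (Lemma \ref{nin}) and Nakayama give $s=0$ for every nilpotent $s$.

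The forward direction also does not close as written. Your second step requires $\Ext_R^1(G,M)$ to be $\GV$-torsion for the finitely generated torsion-free module $G=F/\widetilde K$; but embedding $G$ into $R^r$ with torsion cokernel $C$ only identifies $\Ext_R^1(G,M)$ with $\Ext_R^2(C,M)$, which the nonnil hypothesis (a statement about $\Ext^1$ of torsion modules) does not control, so the reduction is circular. Moreover $\widetilde K/K=\tau(F/K)$ need not be finitely generated, let alone finitely presented, over a non-coherent domain, so the first step is not licensed by Proposition \ref{int-ns} either. The correct reduction filters $F$ the other way and is a one-liner (which is why the paper calls this direction obvious): for $F$ finitely presented of rank $r$, lift a $Q$-basis of $F\otimes_RQ$ to obtain a free submodule $R^r\subseteq F$ with $F/R^r$ finitely presented torsion; then the exact sequence $\Ext_R^1(F/R^r,M)\rightarrow\Ext_R^1(F,M)\rightarrow\Ext_R^1(R^r,M)=0$ exhibits $\Ext_R^1(F,M)$ as a quotient of a $\GV$-torsion module.
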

\begin{proof} If  $R$ is an integral domain, then any  nonnil-absolutely $w$-pure module is absolutely $w$-pure obviously.

On the other hand, we have $\Hom_{R}(R/\Nil(R),\widetilde{E})$ is an injective $R/\Nil(R)$-module  by \cite[Theorem 3.1.6]{EJ00}. Thus by Lemma \ref{R-nil-1}, $\Hom_{R}(R/\Nil(R),\widetilde{E})$  is a nonnil-FP-injective $R$-module, and so is a  nonnil-absolutely $w$-pure $R$-module. Thus we have  $\Hom_{R}(R/\Nil(R),\widetilde{E})$  is an absolutely $w$-pure $R$-module by assumption. That is,  $$\Ext_R^1(F, \Hom_{R}(R/\Nil(R),\widetilde{E}))\cong \Hom_{R}(\Tor_1^R(F, R/\Nil(R)),\widetilde{E})$$ is a $\GV$-torsion module for any finitely presented $R$-module $F$ as $\widetilde{E}$ is an injective $R$-module. Since $\widetilde{E}$ is a $w$-module, $\Hom_{R}(\Tor_1^R(F, R/\Nil(R)),\widetilde{E})$ is also a $w$-module by \cite[Theorem 6.1.18]{fk16}. Thus we have  $\Hom_{R}(\Tor_1^R(F, R/\Nil(R)),\widetilde{E})=0$. Hence $\Tor_1^R(F, R/\Nil(R))$ is $\GV$-torsion by Lemma \ref{cog-tor}. Let $s$ be a nilpotent element in $R$ and set $F=R/\langle s\rangle$. Then  $\Tor_1^R(F, R/\Nil(R))=\Tor_1^R(R/\langle s\rangle, R/\Nil(R))\cong\langle s\rangle\cap \Nil(R)/s\Nil(R)=\langle s\rangle/s\Nil(R)$ is $\GV$-torsion (see \cite[Exercise 3.20]{fk16}). Thus there is a $\GV$-ideal $J$ such that $sJ\subseteq s\Nil(R)$. Since $J$ is a  $\GV$-ideal, then it is a  nonnil ideal, thus $\Nil(R)=J\Nil(R)$ by Lemma \ref{nin}. So $sJ\subseteq s\Nil(R)=sJ\Nil(R)\subseteq sJ$. That is, $sJ=sJ\Nil(R)$.  Since $sJ$ is finitely generated, $sJ=0$ by Nakayama's lemma. Since $J\in \GV(R)$, $sR\subseteq R$ is $\GV$-torsion free, then $s=0$. Consequently, $\Nil(R)=0$ and so $R$ is an integral domain.
\end{proof}

\begin{lemma}\label{str-phi-lc}
Let $R$ be a ring. If $R$ is a $($strongly$)$ $\phi$-ring, then $R_{\p}$ is a $($strongly$)$ $\phi$-ring for any prime ideal $\p$ of $R$.
\end{lemma}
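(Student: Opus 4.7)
The plan is to verify directly that localization preserves the two defining properties of a (strongly) $\phi$-ring. Throughout I will use the standard facts $\Nil(R)\subseteq\p$ (true of every prime) and $\Nil(R_\p)=\Nil(R)R_\p$. A useful preliminary observation is the equivalence $\frac{x}{s}\in\Nil(R_\p)$ if and only if $x\in\Nil(R)$: writing $\frac{x}{s}=\frac{n}{u}$ with $n$ nilpotent and clearing denominators gives $vxu\in\Nil(R)$ for some $v\notin\p$, and since $vu\notin\p\supseteq\Nil(R)$, primality of $\Nil(R)$ forces $x\in\Nil(R)$. The converse is trivial.

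For the $\phi$-ring claim, primality of $\Nil(R_\p)$ reduces to primality of $\Nil(R)$ by the usual clearing-of-denominators argument. For the divided property, pick $\frac{x}{s}\notin\Nil(R_\p)$ and $\frac{y}{t}\in\Nil(R_\p)$; by the preliminary, $x\notin\Nil(R)$ and $y\in\Nil(R)$. The divided property of $\Nil(R)$ in $R$ now yields $r\in R$ with $y=xr$, so $\frac{y}{t}=\frac{x}{s}\cdot\frac{sr}{t}\in(\frac{x}{s})$; strictness of the containment $\Nil(R_\p)\subsetneq(\frac{x}{s})$ is immediate from $\frac{x}{s}\notin\Nil(R_\p)$.

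For the strongly $\phi$ statement, I must additionally establish $\Z(R_\p)\subseteq\Nil(R_\p)$ (the reverse inclusion being automatic). Given $\frac{a}{s}\cdot\frac{b}{t}=0$ with $\frac{b}{t}\neq 0$, clearing denominators produces $u\notin\p$ with $uab=0$ in $R$; rewriting $\frac{b}{t}=\frac{ub}{ut}\neq 0$ forces $ub\neq 0$ in $R$, so $a$ annihilates the nonzero element $ub$, whence $a\in\Z(R)=\Nil(R)$ and $\frac{a}{s}\in\Nil(R_\p)$. The whole argument is routine bookkeeping; the only step that requires any care is the two-way equivalence between lying in the extended nilradical of $R_\p$ and having a nilpotent representative in $R$, and everything else follows formally from the hypotheses on $R$.
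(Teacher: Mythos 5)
Your proposal is correct and follows essentially the same route as the paper: both arguments reduce membership in $\Nil(R_\p)$ to membership in $\Nil(R)$ (via $\Nil(R_\p)=\Nil(R)R_\p$ and primality of $\Nil(R)$), pull the divided property down from $R$ by writing a nilpotent as a multiple of the given non-nilpotent and adjusting denominators, and handle the $\ZN$ condition by clearing denominators so that a zero-divisor relation in $R_\p$ becomes one in $R$. The only cosmetic differences are that the paper derives primality of $\Nil(R_\p)$ from the isomorphism $R_\p/\Nil(R_\p)\cong (R/\Nil(R))_{\overline{\p}}$ and phrases the $\ZN$ step as ``non-nilpotents are regular'' rather than your contrapositive ``zero-divisors are nilpotent.''
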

\begin{proof} Let $R$ be a $\phi$-ring and  $\p$ a  prime ideal  of $R$. Then $R_{\p}/\Nil(R_{\p})\cong (R/\Nil(R))_{\overline{\p}}$ which is certainly an integral domain. So $\Nil(R_{\p})$ is a prime ideal of  $R_{\p}$.
Let $\frac{r}{s}\in R_{\p}-\Nil(R_{\p})$ and $\frac{r_1}{s_1}\in \Nil(R_{\p})$. Note $r\in R-\Nil(R)$ and $r_1\in \Nil(R)$. Then $r_1=rt$ for some $t\in \Nil(R)$. Thus $\frac{r_1}{s_1}=\frac{rt}{s_1}=\frac{rts}{ss_1}=\frac{r}{s}\frac{ts}{s_1}\in \langle \frac{r}{s}\rangle$. So  $\Nil(R_{\p})$ is a divided prime ideal of  $R_{\p}$. Hence $R_{\p}$ is a $\phi$-ring.
Now suppose $R$ is a strongly $\phi$-ring. Let $\frac{r}{s}\in R_{\p}-\Nil(R_{\p})$. Then $r$ is non-nilpotent, and thus $r$ is regular. Assume  $\frac{r}{s}\frac{r_1}{s_1}=0$ in $R_{\p}$. Then there exists $t\in R-\p$ such that $rr_1t=0$. Thus $r_1t=0$. Hence $r_1$ and thus $\frac{r_1}{s_1}$ is equal to $0$ since $t$ is also regular. Consequently, $R_{\p}$ is a strongly $\phi$-ring.
\end{proof}
\begin{remark} Note that the converse of Lemma \ref{str-phi-lc} is not true in general. Indeed, let $R$ be a von Neumann regular ring which is not a field. Then $R_{\p}$ is a field for any prime ideal $\p$ of $R$. However, $R$ is not a $\phi$-ring since $\Nil(R)=0$ is not a prime ideal in this case.
\end{remark}

Let $R$ be an $\NP$-ring. Recall from \cite{ZWT13} that an $R$-module $M$ is said to be \emph{$\phi$-flat} if for every monomorphism $f : A\rightarrow B$ with $\Coker(f)$ $\phi$-torsion,  $f\otimes_R 1 : A\otimes_R M \rightarrow B\otimes_R M$ is a monomorphism; a $\phi$-ring $R$ is said to be $\phi$-von Neumann if every $R$-module is $\phi$-flat. The authors in  \cite[Theorem 4.1]{ZWT13} proved that a $\phi$-ring $R$  is $\phi$-von Neumann if and only if  the Krull dimension of $R$ is $0$. It was also shown in \cite[Theorem 1.7]{ZQ20} that a $\phi$-ring $R$ is $\phi$-von Neumann if and only if $R/\Nil(R)$ is a field, if and only if every non-nilpotent element is invertible, if and only if every $R$-module is nonnil-FP-injective. Recall from \cite[Definition 1.3]{ZxlZ20} that an $R$-module $M$ is said to be \emph{$\phi$-$w$-flat} if, for every monomorphism $f : A\rightarrow B$ with $\Coker(f)$ $\phi$-torsion,  $f\otimes_R 1 : A\otimes_R M \rightarrow B\otimes_R M$ is a $w$-monomorphism. It was proved in \cite[Theorem 3.1]{ZxlZ20} that  a $\phi$-ring $R$ is $\phi$-von Neumann if and only if every $R$-module is $\phi$-$w$-flat. Now we give a new characterization of $\phi$-von Neumann rings.
\begin{lemma}\label{asfapfvnrl}
Let $R$ be  a $\phi$-ring.  Then  $R$ is a $\phi$-von Neumann regular ring  if and only if  $R_{\m}$ is a $\phi$-von Neumann regular ring  for any $\m\in w$-$Max(R)$.
\end{lemma}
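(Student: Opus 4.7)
My plan is to exploit the two characterizations of $\phi$-von Neumann regular rings recalled just above the lemma: that $R$ is $\phi$-von Neumann iff the Krull dimension of $R$ is zero (\cite[Theorem 4.1]{ZWT13}), and iff every $R$-module is $\phi$-$w$-flat (\cite[Theorem 3.1]{ZxlZ20}). Combined with Lemma \ref{str-phi-lc}, these will make both directions essentially formal.

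For the forward direction, assume $R$ is $\phi$-von Neumann. Then every prime of $R$ is maximal, so in particular each $\m\in w\mbox{-}\Max(R)$, being prime, is a maximal ideal. Hence $R_{\m}$ is a local ring of Krull dimension zero, and by Lemma \ref{str-phi-lc} it is a $\phi$-ring. A second appeal to \cite[Theorem 4.1]{ZWT13} then gives that $R_{\m}$ is $\phi$-von Neumann.

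For the converse, suppose $R_{\m}$ is $\phi$-von Neumann for every $\m\in w\mbox{-}\Max(R)$. By \cite[Theorem 3.1]{ZxlZ20} it suffices to show every $R$-module $M$ is $\phi$-$w$-flat. Let $f:A\to B$ be a monomorphism with $\Coker(f)$ $\phi$-torsion. Localizing at any $\m\in w\mbox{-}\Max(R)$, $f_{\m}$ remains injective and $\Coker(f_{\m})=\Coker(f)_{\m}$ is $\phi$-torsion over $R_{\m}$: if $Ix=0$ for a nonnil ideal $I$ of $R$, then $IR_{\m}\cdot(x/s)=0$, and $IR_{\m}$ is nonnil in $R_{\m}$ since any non-nilpotent element of $R$ remains non-nilpotent in $R_{\m}$ (because $\Nil(R_{\m})=\Nil(R)R_{\m}$, as extracted in the proof of Lemma \ref{str-phi-lc}). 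Since $R_{\m}$ is $\phi$-von Neumann, $M_{\m}$ is $\phi$-flat over $R_{\m}$, and so $(f\otimes_R 1_M)_{\m}=f_{\m}\otimes_{R_{\m}}1_{M_{\m}}$ is injective. Consequently $\Ker(f\otimes_R 1_M)$ vanishes after localizing at every maximal $w$-ideal, hence is $\GV$-torsion, so $f\otimes_R 1_M$ is a $w$-monomorphism.

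The only non-routine point is verifying that localization at a maximal $w$-ideal preserves the $\phi$-torsion condition on the cokernel; this comes down to tracking how nonnil ideals behave under localization of a $\phi$-ring, and is a direct consequence of the identification $\Nil(R_{\m})=\Nil(R)R_{\m}$ implicit in Lemma \ref{str-phi-lc}. Once this is in hand the argument is a clean local-global descent via the $\phi$-$w$-flat characterization.
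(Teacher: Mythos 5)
Your proposal is correct, but it takes a genuinely different route from the paper in both directions. The paper works entirely element-wise through the characterization of \cite[Theorem 1.7]{ZQ20} that a $\phi$-ring is $\phi$-von Neumann regular iff every non-nilpotent element is invertible: for the forward direction, a non-nilpotent $\frac{r}{s}\in R_\m$ has $r$ non-nilpotent in $R$, hence invertible, hence $\frac{r}{s}$ is invertible; for the converse, a non-nilpotent $r\in R$ stays non-nilpotent, hence becomes invertible, in every $R_\m$, so $r\notin\m$ for all $\m\in w$-$\Max(R)$, whence $\langle r\rangle_w=R$ and $r$ is a unit by \cite[Exercise 6.11(2)]{fk16}. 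You instead use the Krull-dimension-zero characterization for one direction and the ``every module is $\phi$-$w$-flat'' characterization for the other, with a local-global descent on $\Ker(f\otimes_R 1_M)$. Both arguments are sound; the paper's is shorter and needs only the unit-theoretic criterion, while yours invokes heavier module-theoretic machinery (\cite[Theorem 3.1]{ZxlZ20}) but has the virtue of exhibiting explicitly how $\phi$-$w$-flatness localizes, which is in the spirit of how the lemma is later used. Two small points to tighten: the preservation of $\phi$-torsion under localization that you verify by hand is exactly Lemma \ref{w-phi-tor} of the paper and could simply be cited; and the claim that non-nilpotent elements of $R$ stay non-nilpotent in $R_\m$ needs not just $\Nil(R_\m)=\Nil(R)R_\m$ but also the primeness of $\Nil(R)$ (i.e., the $\phi$-ring hypothesis), which is where \cite[Lemma 1.1]{ZxlZ20} comes in.
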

\begin{proof} Let $R$ be a $\phi$-von Neumann regular ring and  $\m$  a prime ideal. Let $\frac{r}{s}$ be a non-nilpotent element in $R_\m$. Then $r$ is non-nilpotent. So $r$ is invertible by \cite[Theorem 1.7]{ZQ20}. Hence $\frac{r}{s}$ is also invertible in  $R_\m$ implying  $R_{\m}$ is a $\phi$-von Neumann regular ring by \cite[Theorem 1.7]{ZQ20} and Lemma \ref{str-phi-lc}.

Now let $r$ be non-nilpotent element in $R$. Then $\frac{r}{1}$ is a non-nilpotent element in $R_{\m}$ for any $\m\in w$-$Max(R)$, since $R$ is a $\phi$-ring. By \cite[Theorem 1.7]{ZQ20}, $\frac{r}{1}$  is invertible in $R_{\m}$. Thus $r\not\in \m$ for any $\m\in w$-$\Max(R)$. So $\langle r\rangle_w=R$, and hence $r$ is   invertible  by \cite[Exercise 6.11(2)]{fk16}.
\end{proof}

\begin{theorem}\label{asfap-int-2}
Let $R$ be  a $\phi$-ring.  Then  $R$ is a $\phi$-von Neumann regular ring  if and only if  every  $R$-module is nonnil-absolutely $w$-pure.
\end{theorem}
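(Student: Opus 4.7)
The plan is to leverage two characterizations of $\phi$-von Neumann regular rings that are already recalled in the preceding discussion: $R$ is $\phi$-von Neumann iff every $R$-module is nonnil-FP-injective (\cite[Theorem 1.7]{ZQ20}), and iff every $R$-module is $\phi$-$w$-flat (\cite[Theorem 3.1]{ZxlZ20}). The forward direction is then immediate: if $R$ is $\phi$-von Neumann regular, then every $R$-module is nonnil-FP-injective by \cite[Theorem 1.7]{ZQ20}, hence nonnil-absolutely $w$-pure by Corollary \ref{asfap-int-3}.

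For the converse, the strategy is to show that every $R$-module $M$ is $\phi$-$w$-flat, and then invoke \cite[Theorem 3.1]{ZxlZ20}. I would use the ``character-type'' module $M^{\ast}:=\Hom_R(M,\widetilde{E})$ with $\widetilde{E}$ the injective $w$-module constructed before Lemma \ref{cog-tor}. By hypothesis $M^{\ast}$ is nonnil-absolutely $w$-pure, so by Proposition \ref{int-ns}(2) the group $\Ext^1_R(T,M^{\ast})$ is $\GV$-torsion for every finitely presented $\phi$-torsion module $T$. Since $\widetilde{E}$ is injective, the standard adjunction gives
\[
\Ext^1_R(T,\Hom_R(M,\widetilde{E}))\;\cong\;\Hom_R(\Tor_1^R(T,M),\widetilde{E}).
\]
The right-hand side is a $w$-module by \cite[Theorem 6.1.18]{fk16} (because $\widetilde{E}$ is), in particular $\GV$-torsion-free; being simultaneously $\GV$-torsion it must vanish. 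Lemma \ref{cog-tor} then forces $\Tor_1^R(T,M)$ to be $\GV$-torsion for every finitely presented $\phi$-torsion $T$.

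Next I would extend this to arbitrary $\phi$-torsion modules via a direct-limit argument. In a $\phi$-ring, $\Nil(R)$ is prime, so any finitely generated $\phi$-torsion module is annihilated by a single nonnil ideal $I$; writing it as $F/K$ with $F$ finitely generated free and approximating $K$ by finitely generated submodules $K_j\supseteq IF$, the quotients $F/K_j$ are finitely presented and $\phi$-torsion, and $\Tor_1^R(-,M)$ commutes with the relevant colimits. Hence $\Tor_1^R(B/A,M)$ is $\GV$-torsion for every monomorphism $A\hookrightarrow B$ with $B/A$ $\phi$-torsion, which translates via the Tor long exact sequence into $A\otimes_R M\to B\otimes_R M$ being a $w$-monomorphism, i.e.\ $M$ is $\phi$-$w$-flat. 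As $M$ was arbitrary, \cite[Theorem 3.1]{ZxlZ20} concludes that $R$ is $\phi$-von Neumann regular.

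The main obstacle I expect is the bookkeeping in the final direct-limit step---verifying that the approximating finitely generated submodules of a $\phi$-torsion module can be chosen finitely presented while still remaining $\phi$-torsion. Once this (essentially standard) reduction is handled, the adjunction identity and the ``$w$-module that is $\GV$-torsion must be zero'' observation carry the argument.
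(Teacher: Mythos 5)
Your proof is correct, but it takes a genuinely different route from the paper in both directions. For the forward implication the paper argues locally: it invokes that $R$ is a $\phi$-$\PvMR$, so each $R_{\m}$ is a $\phi$-chained (indeed $\phi$-von Neumann) local ring, decomposes a finitely presented $\phi$-torsion module over $R_{\m}$ as $\oplus_i R_{\m}/R_{\m}x_i$ and observes these summands vanish; your two-line derivation from ``$\phi$-von Neumann $\Leftrightarrow$ every module is nonnil-FP-injective'' plus Corollary \ref{asfap-int-3} is shorter and uses only facts already recalled in the text. For the converse the paper applies the hypothesis to conclude that $R/I$ is $w$-projective for every finitely generated nonnil ideal $I$, localizes to get $I_{\m}$ idempotent hence generated by an idempotent, and concludes via Lemma \ref{asfapfvnrl}; you instead dualize with the character module $\Hom_R(-,\widetilde{E})$, use the adjunction $\Ext^1_R(T,\Hom_R(M,\widetilde{E}))\cong\Hom_R(\Tor_1^R(T,M),\widetilde{E})$ together with Lemma \ref{cog-tor} to deduce that $\Tor_1^R(T,M)$ is $\GV$-torsion, and pass to all $\phi$-torsion modules by direct limits to get $\phi$-$w$-flatness of every module. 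Your dualization step mirrors exactly the technique the paper itself uses in Theorem \ref{asfap-int}, so all the ingredients ($\widetilde{E}$ injective, $\Hom_R(-,\widetilde{E})$ a $w$-module, a $\GV$-torsion $w$-module is zero) are available. The only point to tighten is the approximation: $IF$ need not be finitely generated, so one should pick a single non-nilpotent $a\in I$ and take the finitely generated submodules $K_j$ of $K$ containing $aF$, which makes each $F/K_j$ finitely presented and $\phi$-torsion (annihilated by the nonnil ideal $\langle a\rangle$); with that adjustment the colimit argument is sound. The trade-off is that your converse requires the character-module machinery but avoids the projectivity/idempotent-ideal detour, while the paper's version stays entirely inside the $w$-projective framework it needs anyway for Theorem \ref{w-g-flat-1}.
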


\begin{proof} Suppose $R$ is a $\phi$-von Neumann regular ring and $M$ is an $R$-module. Then $R/\Nil(R)$ is a field. By \cite[Theorem 3.3]{ZxlZ20}  $R$ is a $\phi$-\Prufer\ $v$-multiplication ring  and thus $R_{\m}$ is a $\phi$-chained ring for any maximal $w$-ideal $\m$ of $R$.
Let $T$ be a finitely presented $\phi$-torsion module. Then $\Ext_R^1(T,M)_{\m}=\Ext_{R_{\m}}^1(T_{\m},M_{\m})=\oplus_{i=1}^n\Ext_{R_{\m}}^1( R_{\m}/R_{\m}x_i, M_{\m})$ for some non-nilpotent element $x_i\in R_{\m}$ by  \cite[Theorem 3.9.11]{fk16}. By Lemma \ref{asfapfvnrl}, $R_{\m}$ is a $\phi$-von Neumann regular ring. Thus $x_i$ is an invertible element by \cite[Theorem 1.7]{ZQ20}. So $R_{\m}/R_{\m}x_i=0$ and thus $\Ext_{R_{\m}}^1( R_{\m}/R_{\m}x_i, M_{\m})=0$. Hence $\Ext_R^1(T,M)_{\m}=0$ for any $\m\in w$-$\Max(R)$. It follows that $\Ext_R^1(T,M)$ is $\GV$-torsion. Consequently, $M$ is nonnil-absolutely $w$-pure.

On the other hand, let $I$ be a finitely generated nonnil ideal of $R$. Since for any $R$-module $M$, $\Ext_R^1(R/I,M)$  is  $\GV$-torsion, then $R/I$ is finitely generated $w$-projective. $R_{\m}/I_{\m}$ is a finitely generated projective $R_{\m}$-module for any $\m\in w$-$\Max(R)$ by \cite[Theorem 6.7.18]{fk16}. Then $I_{\m}$ is an idempotent ideal of $R_{\m}$ by \cite[Theorem 1.2.15]{g}. By \cite[Chapter I, Proposition 1.10]{FS01}, $I_{\m}$ is generated by an idempotent $e_{\m}\in R_{\m}$. Thus $R_{\m}$ is a $\phi$-von Neumann regular ring by \cite[Theorem 4.1]{ZWT13} and Lemma \ref{str-phi-lc}.
 So $R$ is $\phi$-von Neumann regular by Lemma \ref{asfapfvnrl}.
\end{proof}

\section{Some new characterizations of $\phi$-\Prufer\ $v$-multiplication rings}

Recall from \cite{A01} that a $\phi$-ring  $R$ is said to be a \emph{$\phi$-chain ring} ($\phi$-CR for short) if for any $a,b\in R-\Nil(R)$, either $a|b$ or $b|a$ in $R$. A $\phi$-ring  $R$ is said to be a \emph{$\phi$-\Prufer\ ring} if every finitely generated nonnil ideal $I$ is $\phi$-invertible, i.e., $\phi(I)\phi(I^{-1})=\phi(R)$ where $I^{-1}=\{x\in\T(R)|Ix\subseteq R\}$. It follows from \cite[Corollary 2.10]{FA04} that a $\phi$-ring $R$ is $\phi$-\Prufer, if and only if $R_{\fkm}$ is a $\phi$-CR for any maximal ideal $\fkm$ of $R$, if and only if $R/\Nil(R)$ is a \Prufer\ domain, if and only if $\phi(R)$ is a \Prufer\ ring.

Let $R$ be a $\phi$-ring.  Recall from \cite{kf12} that a nonnil ideal $J$ of $R$ is said to be a \emph{$\phi$-$\GV$-ideal} (resp., \emph{$\phi$-$w$-ideal}) of $R$ if $\phi(J)$ is a $\GV$-ideal (resp., $w$-ideal) of $\phi(R)$.  An ideal $I$ of $R$ is  \emph{$\phi$-$w$-invertible} if $(\phi(I)\phi(I)^{-1})_W=\phi(R)$ where $W$ is the $w$-operation of $\phi(R)$.  In order to extend $\PvMD$s to $\phi$-rings, the authors in \cite{ZxlZ20} gave the notion of $\phi$-\Prufer\ $v$-multiplication rings: a $\phi$-ring $R$ is said to be a \emph{$\phi$-\Prufer\ $v$-multiplication ring} ($\phi$-$\PvMR$ for short) provided that any finitely generated nonnil ideal is $\phi$-$w$-invertible. They also show that a $\phi$-ring $R$ is a $\phi$-$\PvMR$ if and only if  $R_{\fkm}$ is a $\phi$-CR for any $\fkm\in w$-$Max(R)$, if and only if  $R/\Nil(R)$ is a $\PvMD$, if and only if $\phi(R)$ is a $\PvMR$.

Recall that an $R$-module $E$ is said to be  \emph{divisible} if $sM=M$ for any regular element $s\in R$, and an $R$-module $M$ is said to be  \emph{$h$-divisible} provided that $M$ is a quotient of an injective module. Evidently, any injective module is $h$-divisible and any $h$-divisible module is divisible. The author in \cite{ZQ20} introduced the notion of  \emph{nonnil-divisible} modules $E$ in which for any $m\in E$ and any non-nilpotent element $a\in R$, there exists  $x\in E$ such that $ax=m$.

\begin{lemma}\cite[Lemma 2.2]{ZQ20}\label{nonnil-div-ext}
Let $R$ be an $\NP$-ring and $E$ an $R$-module. Consider the following statements:
 \begin{enumerate}
   \item $E$ is nonnil-divisible;
   \item $E$ is divisible;
  \item $\Ext_R^1(R/\langle a\rangle,E)=0$ for any $a\not\in\Nil(R)$.
 \end{enumerate}
Then we have $(1)\Rightarrow (2)$ and $(1)\Rightarrow (3)$. Moreover, if $R$ is a $\ZN$-ring, all  statements are equivalent.
\end{lemma}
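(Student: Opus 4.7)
The plan is to establish three implications in the order $(1)\Rightarrow (2)$, $(1)\Rightarrow (3)$, and then, in the $\ZN$-case, the converses $(2)\Rightarrow (1)$ and $(3)\Rightarrow (1)$. The first and fourth implications are essentially formal, the third is a one-line free-resolution computation, and the second is the one requiring some care.

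For $(1)\Rightarrow (2)$, I would observe that in any commutative ring a regular element is automatically non-nilpotent: if $s$ is regular and $s^n=0$, then $s\cdot s^{n-1}=0$ forces $s^{n-1}=0$, and descending gives $s=0$. So the nonnil-divisibility hypothesis applied to a regular $s$ yields $sE=E$.

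For $(1)\Rightarrow (3)$, I would apply $\Hom_R(-,E)$ to $0\to \langle a\rangle\to R\to R/\langle a\rangle\to 0$ to get the exact sequence $\Hom_R(R,E)\xrightarrow{\rho}\Hom_R(\langle a\rangle,E)\to \Ext^1_R(R/\langle a\rangle,E)\to 0$, and then show $\rho$ is surjective when $a\notin\Nil(R)$. Given $g\in\Hom_R(\langle a\rangle,E)$, nonnil-divisibility applied to $g(a)\in E$ (which is legitimate since $a$ is non-nilpotent) furnishes some $m\in E$ with $am=g(a)$; the assignment $f\colon R\to E$, $r\mapsto rm$, is then an extension of $g$, because any $b=ra\in\langle a\rangle$ satisfies $f(b)=bm=ram=rg(a)=g(ra)=g(b)$ by $R$-linearity of $g$. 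This is the main obstacle: because $a$ need not be regular in a general $\NP$-ring, one cannot start from the free resolution $R\xrightarrow{a}R\to R/\langle a\rangle\to 0$, and the extension has to be built directly from the nonnil-divisibility axiom.

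Finally, suppose $R$ is a $\ZN$-ring. Then $\Z(R)=\Nil(R)$, so non-nilpotent elements are precisely regular elements, making $(2)\Rightarrow (1)$ immediate. For $(3)\Rightarrow (1)$, given non-nilpotent (hence regular) $a$, the free resolution $0\to R\xrightarrow{a}R\to R/\langle a\rangle\to 0$ gives, after $\Hom_R(-,E)$, the exact sequence
\[
E\xrightarrow{a}E\to \Ext^1_R(R/\langle a\rangle,E)\to 0,
\]
so the vanishing hypothesis in $(3)$ forces multiplication by $a$ on $E$ to be surjective, which is exactly nonnil-divisibility with respect to $a$. Combining the implications finishes the equivalence in the $\ZN$-case.
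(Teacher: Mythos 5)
Your proof is correct: the key point, extending a homomorphism $\langle a\rangle\to E$ via nonnil-divisibility applied to $g(a)$ rather than using the (generally unavailable) free resolution $0\to R\xrightarrow{a}R\to R/\langle a\rangle\to 0$, is handled properly, and the $\ZN$-case reductions to regular elements are right. The paper itself gives no proof here --- it cites the lemma from \cite[Lemma 2.2]{ZQ20} --- but your argument is the standard one that such a proof would use.
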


\begin{lemma}\cite[Lemma 2.4]{ZQ20}\label{local nonnil-div}
Let $R$ be an $\NP$-ring and $E$ a nonnil-divisible  $R$-module. Then $E_{\p}$ is a nonnil-divisible  $R_{\p}$-module for any  prime ideal $\p$ of $R$.
\end{lemma}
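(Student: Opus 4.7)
The plan is to prove this by a direct computation in the localization, using the definition of nonnil-divisibility element by element.

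First I would fix an arbitrary prime ideal $\p$ of $R$, take an arbitrary element $\frac{m}{s}\in E_{\p}$ (with $m\in E$, $s\in R\setminus \p$), and an arbitrary non-nilpotent element $\frac{a}{t}\in R_{\p}$ (with $a\in R$, $t\in R\setminus \p$). The goal is to produce $y\in E_{\p}$ with $\frac{a}{t}\cdot y=\frac{m}{s}$.

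The first key observation is that $a$ must itself be non-nilpotent in $R$. Indeed, if $a^n=0$ in $R$ for some $n\geq 1$, then $\bigl(\frac{a}{t}\bigr)^n=\frac{a^n}{t^n}=0$ in $R_{\p}$, contradicting the assumption that $\frac{a}{t}$ is non-nilpotent. (No appeal to the $\NP$ property of $R$ is needed here; the implication holds in any ring.) Since $E$ is nonnil-divisible over $R$ and $a\in R\setminus \Nil(R)$, we can then apply the definition to the element $m\in E$ and obtain some $x\in E$ with $ax=m$.

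The second step is to exhibit the required preimage. I would take $y:=\frac{tx}{s}\in E_{\p}$; then a straightforward computation gives
\[
\frac{a}{t}\cdot\frac{tx}{s}=\frac{atx}{ts}=\frac{ax}{s}=\frac{m}{s},
\]
which shows that $E_{\p}$ is nonnil-divisible as an $R_{\p}$-module. I do not anticipate any real obstacle in this argument; the only subtle point is the verification that a non-nilpotent element of $R_{\p}$ lifts (in the sense of numerator) to a non-nilpotent element of $R$, and this is immediate from the behavior of nilpotence under localization rather than from any structural hypothesis on $R$.
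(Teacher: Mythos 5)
Your proof is correct: the observation that a non-nilpotent $\frac{a}{t}\in R_{\p}$ forces $a$ to be non-nilpotent in $R$ (since $a^n=0$ would give $(\frac{a}{t})^n=0$), followed by dividing $m$ by $a$ in $E$ and adjusting denominators, is exactly the standard argument. The paper itself only cites this as \cite[Lemma 2.4]{ZQ20} without reproducing a proof, and your element-wise localization computation is the expected one.
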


Let $M$ be an $R$-module. Recall from \cite{WK15} that  $M$ is said to have $w$-rank $n$ if, for any maximal $w$-ideal $\m$ of $R$, $M_\m$ is a free $R_\m$-module of
rank $n$. Let $\tau$ denote the trace map of $M$, that is, $\tau: M\otimes_R \Hom(M,R)\rightarrow R$ defined by
$\tau(x\otimes f)=f(x)$ for $x\in M$ and $f\in M$. $M$ is said to be $w$-invertible, if the trace map $\tau$ is a $w$-isomorphism. It was proved in  \cite[Theorem 4.13]{WK15} that  an $R$-module $M$ is $w$-invertible if and only if $M$ is of finite type and has $w$-rank $1$, if and only if $M$ is $w$-projective of finite type and has $w$-rank 1.

\begin{proposition}\label{pro-inv}
Let $R$ be a strongly $\phi$-ring and $I$ a finitely generated nonnil ideal of $R$. If $I$ is $w$-projective, then $I$ is $\phi$-$w$-invertible.
\end{proposition}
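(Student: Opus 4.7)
The plan is to reduce $\phi$-$w$-invertibility to classical $w$-invertibility in $R$, and then to invoke \cite[Theorem 4.13]{WK15}. Because $R$ is a strongly $\phi$-ring, $\Z(R)=\Nil(R)$, so the canonical map $\phi:\T(R)\to R_{\Nil(R)}$ is injective; identifying $\phi(R)$ with $R$, the ideal $\phi(I)$ becomes $I$ and $\phi(I)^{-1}$ becomes $I^{-1}=\{x\in\T(R)\mid xI\subseteq R\}$, and the $w$-operation of $\phi(R)$ agrees with the $w$-operation of $R$. Under these identifications, $\phi$-$w$-invertibility of $I$ reduces to the single assertion $(II^{-1})_w=R$.

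To prove this, I would show that $I$ has $w$-rank one. Since $I$ is finitely generated and $w$-projective, $I$ is of finite type, and $I_{\m}$ is a finitely generated projective, hence free, $R_{\m}$-module for every $\m\in w$-$\Max(R)$. Pick a non-nilpotent $a\in I$; since $\Nil(R)\subseteq\m$ and $R_{\m}$ is itself a strongly $\phi$-ring by Lemma \ref{str-phi-lc}, the image $a/1$ is non-nilpotent and therefore regular in $R_{\m}$. Writing $I_{\m}\cong R_{\m}^{n}$ and composing with the inclusion $I_{\m}\hookrightarrow R_{\m}$, send basis vectors $e_1,\dots,e_n$ to elements $r_1,\dots,r_n\in R_{\m}$. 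Then each $r_je_i-r_ie_j$ maps to $r_jr_i-r_ir_j=0$ and therefore lies in the kernel of this injective map; basis uniqueness forces every $r_i$ to vanish whenever $n\ge 2$, which would make $I_{\m}=0$ and contradict $0\neq a/1\in I_{\m}$. Hence $n=1$, so $I_{\m}=R_{\m}b$ for some regular $b\in R_{\m}$, and $I$ has $w$-rank one.

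Combining $w$-projective, finite type and $w$-rank one, \cite[Theorem 4.13]{WK15} yields that $I$ is $w$-invertible; localizing, $(I^{-1})_{\m}=(I_{\m})^{-1}=R_{\m}b^{-1}$ and $(II^{-1})_{\m}=R_{\m}$ at every $\m\in w$-$\Max(R)$, whence $(II^{-1})_w=R$ and $I$ is $\phi$-$w$-invertible. The step I expect to require the most care is the rank-one argument: it rests on the fact that $I_{\m}$ genuinely contains a regular element, which is guaranteed only because Lemma \ref{str-phi-lc} upgrades non-nilpotency to regularity in $R_{\m}$, and on a clean application of basis uniqueness in $R_{\m}^{n}$ to force every $r_i$ to vanish.
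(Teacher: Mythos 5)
Your proof is correct and follows essentially the same route as the paper's: identify $\phi(I)$ with $I$ via the strongly $\phi$ hypothesis, localize at each maximal $w$-ideal to get a free ideal of $R_{\m}$, rule out rank $\ge 2$ (an ideal cannot be free of higher rank) and rank $0$ (using that a non-nilpotent element of $I$ is regular), and then apply \cite[Theorem 4.13]{WK15}. The only difference is presentational: you spell out the rank-one argument and the final localization check that the paper leaves implicit.
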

\begin{proof} Let  $I$ a finitely generated nonnil ideal of the strongly $\phi$-ring $R$. Then $I$ is a regular ideal of $R$. Let $\m$ be a maximal $w$-ideal of $R$. Since $I$ is $w$-projective  $R$-ideal, $I_\m$ is a free ideal of $R_\m$ by \cite[Theorem 6.7.11]{fk16}.  Then $I_\m\cong R_\m$ or $I_\m=0$. We claim that $I_\m\cong R_\m$. Indeed, let $r$ be a regular element in $I$. If $I_\m=0$, then there is an element $s\in R-\m$ such that $rs=0$. So $s=0$ which is a contradiction. Hence, $I_\m$ is of rank $1$ for any maximal $w$-ideal $\m$ of $R$. By \cite[Theorem 4.13]{WK15},  $\phi(I)=I$ is $w$-invertible since $R$ is a strongly $\phi$-ring. Hence, $I$ is $\phi$-$w$-invertible.
\end{proof}

\begin{lemma}\cite[Proposition 2.12]{ZQ20}\label{w-phi-tor}
Let $R$ be an $\NP$-ring, $\p$  a  prime ideal of $R$ and $M$ an $R$-module. Then $M$ is $\phi$-torsion over $R$ if and only $M_{\p}$ is $\phi$-torsion over $R_{\p}$.
\end{lemma}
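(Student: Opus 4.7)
The plan is to prove both implications directly by chasing annihilators, leaning on two structural features of $\NP$-rings: $\Nil(R)$ is a prime ideal, and $\Nil(R)\subseteq \p$ for every prime ideal $\p$ of $R$. Together these ensure that non-nilpotence of elements is preserved in both directions under localization at $\p$, which is the precise content needed.

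For $(\Rightarrow)$, given $x/s\in M_{\p}$, I would use that $M$ is $\phi$-torsion to pick a nonnil ideal $I\subseteq R$ with $Ix=0$ and then choose $a\in I$ with $a\notin\Nil(R)$. The target is to show that $a/1\in R_{\p}$ is non-nilpotent, so that $\langle a/1\rangle\subseteq I_\p$ is a nonnil ideal of $R_\p$ annihilating $x/s$. This will reduce to the observation that if $(a/1)^n=0$ in $R_{\p}$, say $u a^n=0$ in $R$ with $u\in R-\p$, then primality of $\Nil(R)$ together with $a\notin\Nil(R)$ forces $u\in\Nil(R)\subseteq\p$, a contradiction.

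For $(\Leftarrow)$, take any $x\in M$ and apply the $\phi$-torsion hypothesis to $x/1\in M_{\p}$: there exists $a/t\in R_{\p}$ non-nilpotent with $(a/t)(x/1)=0$. Clearing denominators produces $u\in R-\p$ with $uax=0$ in $M$, and I would then show $ua\in R$ is non-nilpotent. The element $a$ is non-nilpotent in $R$ (otherwise $(a/t)^n=0$), and $u\notin\Nil(R)$ since $u\notin\p\supseteq\Nil(R)$; primality of $\Nil(R)$ then upgrades this to $ua\notin\Nil(R)$. Thus $\langle ua\rangle$ is a nonnil ideal annihilating $x$, so $x\in\phi\mbox{-}tor(M)$, and $M$ is $\phi$-torsion.

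The only real obstacle is the interaction between the nilradical and localization: in a general commutative ring, a non-nilpotent element can easily become nilpotent after localization (whenever some element outside $\p$ annihilates a power of it). The $\NP$-hypothesis removes this obstruction cleanly, because any such annihilator would have to lie in $\Nil(R)$, which is contained in every prime. Once this single structural fact is invoked, both implications reduce to short element manipulations; no homological input is needed.
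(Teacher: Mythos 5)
Your argument is correct: both directions reduce, exactly as you say, to the facts that $\Nil(R)$ is prime and is contained in every prime ideal $\p$, which guarantee that non-nilpotence passes back and forth under localization (and also that $R_\p$ is again an $\NP$-ring, so the statement over $R_\p$ makes sense). The paper itself gives no proof — it imports the lemma by citation from \cite{ZQ20} — but your element-chasing argument is the standard one for this result, so there is nothing further to compare.
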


\begin{lemma}\label{w-phi-tor-free}
Let $R$ be an $\NP$-ring,  $M$ an $R$-module. Suppose $M$ is $\phi$-torsion free over $R$, $M_{\m}$ is $\phi$-torsion free over $R_{\m}$ for any  maximal $w$-ideal $\m$ of $R$. Moreover, if $M$ is $\GV$-torsion free, then the converse also holds.
\end{lemma}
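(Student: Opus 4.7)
My plan is to unpack $\phi$-torsion-freeness on both sides and exploit two ingredients: (i) the primality of $\Nil(R)$ (the $\NP$ hypothesis), so products of non-nilpotents are non-nilpotent; and (ii) the fact from \cite[Theorem 6.2.15]{fk16} that every maximal $w$-ideal is prime, hence contains $\Nil(R)$, so any $u\in R-\m$ with $\m\in w$-$\Max(R)$ is automatically non-nilpotent.

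For the forward direction, I would fix $\m\in w$-$\Max(R)$, take $x/s\in\phi$-$tor(M_\m)$, and pick a non-nilpotent $a/t$ in a nonnil ideal of $R_\m$ annihilating $x/s$. Clearing denominators gives $uax=0$ in $M$ for some $u\in R-\m$. Here $a$ is non-nilpotent in $R$ (otherwise $a/t$ would be nilpotent), and $u$ is non-nilpotent by (ii); by (i) the product $ua$ is non-nilpotent, so $\langle ua\rangle x=0$ with $\langle ua\rangle$ nonnil, and $\phi$-torsion-freeness of $M$ yields $x=0$.

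For the converse, assume in addition that $M$ is $\GV$-torsion-free and take $x\in\phi$-$tor(M)$ with $Ix=0$ for a nonnil ideal $I$; pick a non-nilpotent $a\in I$. The same $\NP$-argument shows that $a/1$ remains non-nilpotent in every $R_\m$ with $\m\in w$-$\Max(R)$: a relation $va^n=0$ with $v\in R-\m$ would force $va^n\in\Nil(R)$, but $v\notin\Nil(R)$ combined with the primality of $\Nil(R)$ would then push $a$ into $\Nil(R)$, a contradiction. Thus $(a/1)(x/1)=0$, and $\phi$-torsion-freeness of $M_\m$ gives $x/1=0$ in $M_\m$, so some $u_\m\in R-\m$ kills $x$. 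Consequently $\ann_R(x)$ meets $R-\m$ for every $\m\in w$-$\Max(R)$, so it is not contained in any maximal $w$-ideal; the standard $w$-theoretic equivalence then shows $R/\ann_R(x)$ is $\GV$-torsion, so $\ann_R(x)$ contains a $\GV$-ideal $J_0$, and $J_0x=0$ combined with the $\GV$-torsion-freeness of $M$ forces $x=0$.

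The main obstacle is the very last step: turning the pointwise vanishing of $x$ in every $M_\m$ into actual vanishing in $M$. This is precisely where the $\GV$-torsion-free hypothesis on $M$ enters, via the standard fact that a module is $\GV$-torsion iff it vanishes at every maximal $w$-ideal (used throughout the paper, e.g. via the citation to \cite[Exercise 6.11(2)]{fk16} in the proof of Lemma \ref{asfapfvnrl}). Without this assumption one would only conclude that $x$ is $\GV$-torsion, not zero, which explains why the converse genuinely needs the extra hypothesis.
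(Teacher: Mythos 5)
Your proof is correct and follows essentially the same route as the paper's: localize, clear denominators using the primality of $\Nil(R)$ to keep a non-nilpotent annihilator, and in the converse detect that $\langle x\rangle$ vanishes at every maximal $w$-ideal, hence is $\GV$-torsion, so $x=0$ by $\GV$-torsion-freeness. The only cosmetic difference is that you work with a single non-nilpotent element of the nonnil ideal rather than the whole ideal (the paper instead cites \cite[Lemma 1.1]{ZxlZ20} for the transfer of nonnilness under localization), which if anything cleanly avoids any finite-generation issue when clearing denominators.
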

\begin{proof} Suppose $M$ is a $\phi$-torsion free $R$-module. Let $\m$ be a maximal $w$-ideal of $R$ and $\frac{m}{s}\in M_{\m}$. Suppose $I_{\m}$ is a nonnil ideal of $R_{\m}$ and $I_{\m}\frac{m}{s}=0$ in $M_{\m}$. then there exists $t\not\in \m$ such that $tIm=0$ in $R$. Since $I$ is nonnil in $R$ by  \cite[Lemma 1.1]{ZxlZ20}, we have $It$ is also nonnil as $t$ is non-nilpotent. Since $M$ be an $\phi$-torsion free, $m$ and thus $\frac{m}{s}$ is equal to $0$.

Suppose $M$ is a $\GV$-torsion free  $R$-module such that $M_{\m}$ is $\phi$-torsion free over $R_{\m}$ for any  maximal $w$-ideal $\m$ of $R$. Let $m\in M$ such that $Im=0$ for some nonnil ideal $I$ of $R$. Then $I_{\m}\frac{m}{1}=0$ in $M_{\m}$. Since $I_{\m}$ is nonnil in $R_{\m}$ by  \cite[Lemma 1.1]{ZxlZ20}, $\langle m\rangle_{\m}=0$ for any maximal $w$-ideal $\m$ of $R$. Thus $\langle m\rangle$ is $\GV$-torsion in $M$ by \cite[Theorem 6.2.15]{fk16}.  Since $M$ is $\GV$-torsion free by assumption, we have  $m=0$.
\end{proof}

It is well-known that an integral domain $R$ is a $\PvMD$ if and only if every torsion-free $R$-module is $w$-flat, if and only if every ($h$-)divisible $R$-module is absolutely $w$-pure (see \cite{fq15,xw182}). Recently, the authors in \cite{ZQ20} characterized  $\phi$-\Prufer\ rings in terms of nonnil-FP-injective modules, that is,  a strongly $\phi$-ring $R$ is a $\phi$-\Prufer\ ring if and only if any $\phi$-torsion free $R$-module is $\phi$-flat, if and only if  any  ($h$-)divisible module is nonnil-FP-injective. Now, we characterize $\phi$-$\PvMR$s in terms of  $\phi$-$w$-flat modules, nonnil-absolutely $w$-pure modules and $w$-projective modules, which can be seen as a generalization of the results in \cite{fq15,xw182,ZQ20}.
\begin{theorem}\label{w-g-flat-1}
Let $R$ be a strongly $\phi$-ring. The following statements are equivalent for $R$:
\begin{enumerate}
   \item $R$ is a $\phi$-$\PvMR$;
    \item  any $\phi$-torsion free $R$-module is $\phi$-$w$-flat;
    \item  any nonnil ideal of $R$ is $w$-flat;
    \item  any ideal of $R$ is $\phi$-$w$-flat;

     \item   any  divisible module is  nonnil-absolutely $w$-pure module;

      \item   any $h$-divisible module is  nonnil-absolutely $w$-pure module;

      \item  any finitely generated nonnil ideal of $R$ is $w$-projective;

       \item  any finite type nonnil ideal of $R$ is $w$-projective.
\end{enumerate}
\end{theorem}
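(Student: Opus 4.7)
The plan is to prove each of the conditions (2)--(8) equivalent to (1) by pivoting on the localisation criterion from \cite{ZxlZ20}: a strongly $\phi$-ring $R$ is a $\phi$-$\PvMR$ if and only if $R_{\m}$ is a $\phi$-CR for every $\m\in w$-$\Max(R)$. Combined with the localisation Lemmas \ref{str-phi-lc}, \ref{w-phi-tor}, \ref{w-phi-tor-free} and \ref{local nonnil-div}, this reduces each equivalence to a statement over a $\phi$-CR, where the structure theory of finitely presented $\phi$-torsion modules is essentially that of a valuation domain modulo the divided prime $\Nil(R)$.

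For the flat cluster (1)$\Leftrightarrow$(2)$\Leftrightarrow$(3)$\Leftrightarrow$(4) I would proceed as follows. Assuming (1), a $\phi$-torsion-free $M$ localises to a $\phi$-torsion-free $R_{\m}$-module by Lemma \ref{w-phi-tor-free}, and the $\phi$-Pr\"ufer characterisation of \cite{ZQ20} (applied in the $\phi$-CR $R_{\m}$) makes $M_{\m}$ $\phi$-flat; localising Tor then yields $\phi$-$w$-flatness over $R$, giving (2). Since every ideal of a strongly $\phi$-ring is automatically $\phi$-torsion-free, $(2)\Rightarrow(4)$ is immediate. For $(1)\Rightarrow(3)$, a nonnil ideal $I$ has $I_{\m}$ nonnil in the $\phi$-CR $R_{\m}$, where finitely generated nonnil ideals are principal on regular generators and arbitrary nonnil ideals are filtered colimits of these, hence flat. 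The reverses $(2)\Rightarrow(1)$, $(3)\Rightarrow(1)$, $(4)\Rightarrow(1)$ all specialise to finitely generated nonnil $I$: $\phi$-$w$-flatness plus the $\GV$-torsion-freeness of the ideal forces $I_{\m}$ to be flat, and since it contains a regular generator it must be free of rank one, so principal on a regular element; the usual valuation-ring trick ($c=ax+by$ with one coefficient a unit) then shows $R_{\m}$ is a $\phi$-CR.

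For the $w$-projective cluster, $(1)\Rightarrow(7)$ uses $\phi$-$w$-invertibility combined with the trace-map criterion \cite[Theorem 4.13]{WK15} identifying $w$-projective modules of finite type with $w$-invertibles. $(7)\Rightarrow(8)$ is $w$-invariance: a finite-type nonnil $I$ contains a finitely generated nonnil $J$ with $J_w=I_w$, and $w$-projectivity passes from $J$ to $I$ through $\Lt$. The implication $(8)\Rightarrow(1)$ is exactly Proposition \ref{pro-inv}. For the purity cluster, $(1)\Rightarrow(5)$ is another local check: given a divisible $M$ and a finitely presented $\phi$-torsion $T$, Lemma \ref{local nonnil-div} makes $M_{\m}$ nonnil-divisible over the $\ZN$ $\phi$-CR $R_{\m}$, \cite[Theorem 3.9.11]{fk16} decomposes $T_{\m}\cong\bigoplus R_{\m}/a_iR_{\m}$ with $a_i$ non-nilpotent, and Lemma \ref{nonnil-div-ext} kills each $\Ext_{R_{\m}}^{1}(R_{\m}/a_iR_{\m},M_{\m})$. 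The implication $(5)\Rightarrow(6)$ is immediate.

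The decisive new step is $(6)\Rightarrow(1)$. Given any $R$-module $M$, I would embed it in its injective envelope $E$, producing $0\to M\to E\to E/M\to 0$ with $E/M$ an $h$-divisible module. Hypothesis (6) together with Proposition \ref{int-ns} makes $\Ext_R^{1}(T,E/M)$ $\GV$-torsion for every finitely presented $\phi$-torsion $T$; since $\Ext_R^{i}(T,E)=0$ for $i\geq 1$, the long exact sequence collapses to $\Ext_R^{1}(T,E/M)\cong\Ext_R^{2}(T,M)$, forcing $\Ext_R^{2}(T,M)$ to be $\GV$-torsion for every $M$. Specialising $T=R/I$ with $I$ a finitely generated nonnil ideal and applying this to $0\to I\to R\to R/I\to 0$ yields $\Ext_R^{1}(I,M)\cong\Ext_R^{2}(R/I,M)$, which is therefore $\GV$-torsion for every $M$; this identifies $I$ as $w$-projective of finite type, and Proposition \ref{pro-inv} delivers $\phi$-$w$-invertibility, i.e.\ (1). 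The main obstacle I anticipate is keeping the local-to-global transfer clean in the flat arc (juggling $\phi$-torsion, $\GV$-torsion, $\phi$-$w$-flat and $w$-flat), whereas the genuinely new idea is the injective-envelope manoeuvre that converts the purity hypothesis on $h$-divisible modules into an Ext-vanishing statement on finitely generated nonnil ideals.
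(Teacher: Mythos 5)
Most of your plan coincides with the paper's proof: $(1)\Rightarrow(2)$ via localization at maximal $w$-ideals and the $\phi$-CR characterization, $(1)\Rightarrow(5)$ via the decomposition $T_{\m}\cong\oplus R_{\m}/x_iR_{\m}$ and Lemmas \ref{nonnil-div-ext}--\ref{local nonnil-div}, $(7)\Rightarrow(8)$ via $w$-isomorphism invariance, and above all your "decisive new step" $(6)\Rightarrow(1)$ --- embedding $M$ in its injective envelope, dimension-shifting to get $\Ext_R^1(I,M)\cong\Ext_R^2(R/I,M)\cong\Ext_R^1(R/I,E/M)$ $\GV$-torsion, concluding $w$-projectivity of $I$, and finishing with Proposition \ref{pro-inv} --- is exactly the paper's route $(6)\Rightarrow(7)\Rightarrow(1)$. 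Your direct $(1)\Rightarrow(7)$ via the trace-map criterion of \cite[Theorem 4.13]{WK15} is a harmless shortcut the paper does not take (it reaches $(7)$ through $(5)$ and $(6)$).

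The one genuine soft spot is in your closing of the flat cluster. For $(2)\Rightarrow(1)$ and $(4)\Rightarrow(1)$ you assert that "$\phi$-$w$-flatness plus the $\GV$-torsion-freeness of the ideal forces $I_{\m}$ to be flat." Localizing a $\phi$-$w$-flat ideal only yields a $\phi$-flat $R_{\m}$-module, i.e.\ vanishing of $\Tor_1^{R_{\m}}(R_{\m}/J_{\m},I_{\m})$ for \emph{nonnil} $J_{\m}$, and $\phi$-flat is strictly weaker than flat (over a $\phi$-von Neumann regular ring every module is $\phi$-flat). So the step from $\phi$-flatness to "free of rank one" is unjustified as written, and it is precisely the content you would need to supply; the subsequent valuation-ring trick is fine once principality is known. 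Your argument does go through verbatim for $(3)\Rightarrow(1)$, where the hypothesis gives honest $w$-flatness and hence local freeness of finitely generated nonnil ideals. The cleanest repair is the one the paper uses: the symmetry $\Tor_1^R(R/J,I)\cong\Tor_2^R(R/J,R/I)\cong\Tor_1^R(R/I,J)$ for $J$ nonnil shows $(4)\Leftrightarrow(3)$, after which your $(3)\Rightarrow(1)$ argument (or the citation of \cite[Theorem 3.8]{ZxlZ20}) closes the cycle. Also note that your appeal to "arbitrary nonnil ideals are filtered colimits" in $(1)\Rightarrow(3)$ should be phrased as the directed union of the finitely generated subideals containing a fixed non-nilpotent element, so that each term is itself nonnil and hence principal on a regular element in the $\phi$-CR $R_{\m}$.
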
\

\begin{proof}  $(1)\Rightarrow (2)$: Let $\m$ be a maximal $w$-ideal of $R$, $M$ a $\phi$-torsion free $R$-module.  By Lemma \ref{w-phi-tor-free}, $M_{\m}$ is $\phi$-torsion free over $R_{\m}$. Since $R$ is a $\phi$-$\PvMR$, $R_{\m}$ is a $\phi$-CR by \cite[Theorem 3.3]{ZxlZ20}. Then $M_{\m}$ is $\phi$-flat by \cite[Theorem 4.3]{Z18}, and thus $M$ is $\phi$-$w$-flat by \cite[Theorem 1.4]{ZxlZ20}.

$(2)\Rightarrow (4)$: It follows from $R$ is $\phi$-torsion free since $R$ is a strongly $\phi$-ring (see \cite[Proposition 2.2]{Z18}).

$(4)\Leftrightarrow (3)$:  Let $J$ be a nonnil ideal of $R$, $I$ an ideal of $R$. We have
$$\Tor_1^R(R/J,I)\cong \Tor_2^R(R/J,R/I)\cong \Tor_1^R(R/I,J).$$ The result follows the statement that  any ideal of $R$ is $\phi$-$w$-flat is equivalence to the statement that  any nonnil ideal of $R$  is $w$-flat.

$(4)\Rightarrow (1)$: See \cite[Theorem 3.8]{ZxlZ20}.

$(1)\Rightarrow(5)$: Let  $T$ be a finitely presented $\phi$-torsion module and $\m$  a maximal $w$-ideal of $R$. Then by Lemma \ref{w-phi-tor}, $T_{\m}$ is a finitely presented $\phi$-torsion $R_{\m}$-module.
By \cite[Theorem 3.3]{ZxlZ20}, $R_{\m}$ is a $\phi$-chained ring. Thus,  by \cite[Theorem 4.1]{Z18}, $T_{\m}\cong \oplus_{i=1}^n R_{\m}/R_{\m}x_i$ for some regular element $x_i\in R_{\m}$ as $R_{\m}$ is a strongly $\phi$-ring by Lemma \ref{str-phi-lc}. Let $E$ be a divisible module. Then $E_{\m}$ is a divisible module over $R_{\m}$ by Lemma \ref{nonnil-div-ext} and Lemma \ref{local nonnil-div}.
 Thus $\Ext_R^1(T,E)_{\m}=\Ext_{R_{\m}}^1(T_{\m},E_{\m})=\oplus_{i=1}^n\Ext_{R_{\m}}^1( R_{\m}/R_{\m}x_i, E_{\m})=0$ by  Lemma \ref{nonnil-div-ext} and \cite[Theorem 3.9.11]{fk16}. It follows that $\Ext_R^1(T,E)$ is a $\GV$-torsion module. Therefore, $E$ is a nonnil-absolutely $w$-pure module.

$(5)\Rightarrow  (6)$ and $(8)\Rightarrow(7)$: Trivial.

$(6)\Rightarrow(7)$: Let $N$ be an $R$-module, $I$ a finitely generated nonnil ideal of $R$. The short exact sequence $0\rightarrow I\rightarrow R \rightarrow R/I\rightarrow 0$ induces a long exact sequence as follows: $$0=\Ext_R^1(R,N)\rightarrow \Ext_R^1(I,N)\rightarrow \Ext_R^2(R/I,N)\rightarrow \Ext_R^2(R,N)=0.$$
Let $0\rightarrow N\rightarrow E\rightarrow K\rightarrow 0$ be an exact sequence where $E$ is the  injective envelope of $N$. There exists a long exact sequence as follows:
$$0=\Ext_R^1(R/I,E)\rightarrow \Ext_R^1(R/I,K)\rightarrow \Ext_R^2(R/I,N)\rightarrow \Ext_R^2(R/I,E)=0.$$
Thus $\Ext_R^1(I,N)\cong \Ext_R^2(R/I,N)\cong \Ext_R^1(R/I,K)$ is a $\GV$-torsion module as $K$ is nonnil-absolutely $w$-pure by $(6)$. It follows that $I$ is a $w$-projective ideal of $R$ by \cite[Corollary 2.5]{fq21}.

$(7)\Rightarrow(1)$: It follows from Proposition \ref{pro-inv}.

$(7)\Rightarrow(8)$: Let $I$ be a finite type nonnil ideal of $R$, then there is a finitely generated sub-ideal $K$ of $I$ such that $K/I$ is $GV$-torsion (see \cite[Proposition 6.4.2(3)]{fk16}). Then $I$ is $w$-isomorphic to $K$. We claim that $K$ is a nonnil ideal. Indeed,  since $I$ is nonnil, there is an non-nilpotent element $s\in I$. Thus there is a $\GV$-ideal $J$ of $R$ such that $Js\subseteq K$. Since $J$ is nonnil and $R$ is a $\phi$-ring, we have $K$ is a nonnil ideal of $R$. By (7), $K$ is $w$-projective. And thus $I$ is $w$-projective by \cite[Proposition 6.7.8(1)]{fk16}.
\end{proof}

Obviously, every  nonnil-FP-injective module is nonnil-absolutely $w$-pure. The following example shows that the converse does not hold in general.
\begin{example} Let $D$ be a $\PvMD$ but not a \Prufer\ domain, $K$ the quotient field of $D$. Then the idealization $R=D(+)K$ is  a  $\phi$-$\PvMR$ but not a $\phi$-\Prufer\ ring. Note that $R$ is a strongly $\phi$-ring by \cite[Remark 1]{FA05}. Thus there is a nonnil-absolutely $w$-pure divisible module $M$ which is not  nonnil-FP-injective by Theorem \ref{w-g-flat-1} and \cite[Theorem 2.13]{ZQ20}.
\end{example}

\begin{acknowledgement}\quad\\
The first author was supported by the Natural Science Foundation of Chengdu Aeronautic Polytechnic (No. 062026)  and the National Natural Science Foundation of China (No. 12061001).
\end{acknowledgement}

\bigskip

\end{document}